\newtheorem{Thm}{Theorem}[section] 
\newtheorem{Lem}[Thm]{Lemma} 
\newtheorem{Cor}[Thm]{Corollary} 
\theoremstyle{definition}
\newtheorem{Def}[Thm]{Definition}
\newtheorem{Rem}[Thm]{Remark}
\numberwithin{equation}{section}
\newcommand{\Clo}{\mathrm{Clo}}
\newcommand{\Cid}{\mathrm{Clg}}
\newcommand{\Pol}{\mathrm{Pol}}
\newcommand{\ZZ}{{\mathbb{Z}}}
\newcommand{\NN}{{\mathbb{N}}}
\newcommand{\FF}{{\mathbb{F}}}
\DeclareMathAlphabet\mathbfsl {T1}{cmr}{bx}{it}
\title[Closed sets of functions]{Closed sets of finitary functions between finite fields of coprime order}
\author{Stefano Fioravanti}
\address{Stefano Fioravanti,
Institut f\"ur Algebra,
Johannes Kepler Universit\"at Linz,
4040 Linz,
Austria}
\email{\tt stefano.fioravanti@jku.at}
\subjclass[2018]{08A40}
\urladdr{http://www.jku.at/algebra}
\thanks{Supported by the Austrian Science Fund (FWF):P29931.}
\keywords{Clonoids, Clones}
\date{\today}
\begin{document}

\begin{abstract}

We investigate the finitary functions from a finite field $\mathbb{F}_q$ to the finite field $\mathbb{F}_p$, where $p$ and $q$ are powers of different primes. An $(\mathbb{F}_p,\mathbb{F}_q)$-linearly closed clonoid is a subset of these functions which is closed under composition from the right and from the left with linear mappings. 
	
We give a characterization of these subsets of functions through the invariant subspaces of the vector space $\mathbb{F}_p^{\mathbb{F}_q\backslash\{0\}}$ with respect to a certain linear transformation with minimal polynomial $x^{q-1} - 1$. Furthermore we prove that each of these subsets of functions is generated by one unary function. 
	
\end{abstract}

\maketitle

\section{Introduction}

The problem of characterizing sets of functions that satisfy some closure properties plays an increasingly important role in General Algebra. E. Post's characterization of all clones on a two-element set \cite{Pos.TTVI} is a foundational result in this field, which was developed further, e. g., in \cite{Ros.MCOA,PK.FUR,Sze.CIUA,Leh.CCOF}. Starting from \cite{BJK.TCOC}, clones are used to study the complexity of certain constraint satisfaction problems (CSPs).
 
The aim of this paper is to describe sets of functions from $\mathbb{F}_q$ to $\mathbb{F}_p$ that are closed with linear mappings from the left and from the right, in the case $p$ and $q$ are powers of distinct primes. We are dealing with sets of functions with different domains and codomains; such sets are investigated, e. g., in \cite{AM.FGEC} and are called clonoids. Let $\mathbf{B}$ be an algebra, and let $A$ be a non-empty set. For a subset $C$ of $\bigcup_{n \in \NN} B^{A^n}$ and $k\in \NN$, we let $C^{[k]} :=C \cap B^{A^k}$. According to Definition $4.1$ of \cite{AM.FGEC} we call $C$ a \emph{clonoid} with source set $A$ and target algebra $\mathbf{B}$ if
\begin{center}
	\begin{enumerate}
		\item [(1)] for all $k \in \NN$: $C^{[k]}$ is a subuniverse of $\mathbf{B}^{A^k}$, and
		\item [(2)] for all $k,n \in \NN$, for all $(i_1,\dots,i_k) \in \{1,\dots,n\}^k$, and for all $c \in C^{[k]}$, the function $c' : A^n \to B$ with $c'(a_1,\dots,a_n) := c(a_{i_1},\dots,a_{i_k})$ lies in $C^{[n]}$.
	\end{enumerate}
\end{center}

By $(1)$ every clonoid is closed under composition with operations of $\mathbf{B}$ on the left. In particular we are interested in those clonoids whose target algebra is the vector space $\mathbb{F}_p$ which are closed under composition with linear mappings also from the right side.

\begin{Def}
	\theoremstyle{definition}
	\label{DefClo}
	Let $p$ and $q$ be powers of different primes, and let $\mathbb{F}_p$ and $\mathbb{F}_q$ be two fields of orders $p$ and $q$. An \emph{$(\mathbb{F}_p,\mathbb{F}_q)$-linearly closed clonoid} is a non-empty subset $C$ of $\bigcup_{n \in \mathbb{N}} \mathbb{F}^{\mathbb{F}^n_q}_p$ with the following properties:
	
	\begin{enumerate}
		\item[(1)] for all $n \in \NN$, $f,g \in C^{[n]}$ and $a, b \in \mathbb{F}_p$:
		
		\begin{center}
			$af + bg \in C^{[n]}$;
		\end{center}
		
		\item[(2)] for all $m,n \in \NN$, $f \in C^{[m]}$ and $A \in \mathbb{F}^{m \times n}_q$:
		
		\begin{center}
			$g: (x_1,\dots,x_n) \mapsto f(A\cdot (x_1,\dots,x_n)^t)$ is in $C^{[n]}$.
		\end{center}
		
	\end{enumerate}
\end{Def}

Clonoids are of interest since they naturally arise in the study of promise constraint satisfaction problems (PCSPs). These problems are investigated,  e. g., in \cite{BG.PCSS}, and recently in \cite{BKO.AATP} clonoid theory has been used to give an algebraic approach to PCSPs. Moreover, a description of the set of all $(\mathbb{Z}_p,\mathbb{Z}_q)$-linearly closed clonoids, where $p$ and $q$ are distinct primes, is a useful tool to investigate (polynomial) clones on $\ZZ_p \times \ZZ_q$ or to represent polynomial functions of semidirect products of groups. In \cite{Kre.CFSO} S. Kreinecker characterized linearly closed clonoids on $\mathbb{Z}_p$, where $p$ is a prime, and found a description of all clones on $\mathbb{Z}_p$ that contain the addition, all iterative algebras on $\ZZ_p$ which are closed under composition with the clone generated by $+$ from both sides, and proved that there are infinitely many non-finitely generated clones above $\Clo(\mathbb{Z}_p \times \mathbb{Z}_p , +)$ for $p > 2$.

Our main result (Theorem \ref{ThmShapeL1}) provides a complete description of  the lattice of all $(\mathbb{F}_p,\mathbb{F}_q)$-linearly closed clonoids, where $p$ and $q$ are powers of different primes. First, an important observation is that each such clonoid is generated by its subset of unary members (Theorem \ref{Th1}). We can say even more about the generators of an $(\mathbb{F}_p,\mathbb{F}_q)$-linearly closed clonoid.

\begin{Thm}
	\label{Thm14}
	Let $p$ and $q$ be powers of different primes. Then every $(\mathbb{F}_p,\mathbb{F}_q)$-linearly closed clonoid is generated by one unary function.
\end{Thm}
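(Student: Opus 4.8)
The plan is to reduce everything to unary functions and then to recognise the unary part of the clonoid as a submodule of a cyclic module over a principal ideal ring. By Theorem~\ref{Th1} a clonoid $C$ is generated by its unary part $C^{[1]}$, so it suffices to exhibit a single $f_0 \in C^{[1]}$ with $C^{[1]} \subseteq \langle f_0\rangle$. First I would record which unary functions a single $f_0$ produces: the only right-compositions that keep $f_0$ unary are the $1\times 1$ matrices $(a)$, $a \in \mathbb{F}_q$, giving $f_0(ax)$, so together with the left $\mathbb{F}_p$-linear combinations one gets exactly $\mathrm{span}_{\mathbb{F}_p}\{\, f_0(ax) : a \in \mathbb{F}_q \,\}$. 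I would then check that no higher arity creates extra unary members: a general $n$-ary element of $\langle f_0\rangle$ has the form $\sum_j c_j\, f_0(\ell_j)$ with $\ell_j$ linear forms over $\mathbb{F}_q$ (this set contains $f_0$ and is closed under both clonoid operations, hence is the whole clonoid), and substituting one variable $x$ for all arguments sends it back into $\mathrm{span}_{\mathbb{F}_p}\{f_0(ax):a\in\mathbb{F}_q\}$.

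The next step is to read this span as a module action. The substitution operators $\sigma_a\colon f \mapsto f(ax)$ satisfy $\sigma_a\sigma_b = \sigma_{ab}$, so they realise the multiplicative monoid $(\mathbb{F}_q,\cdot)$ and make $\mathbb{F}_p^{\mathbb{F}_q}$ a module over the monoid algebra $A := \mathbb{F}_p[(\mathbb{F}_q,\cdot)]$; conditions (1) and (2) of Definition~\ref{DefClo} in arity $1$ say precisely that $C^{[1]}$ is an $A$-submodule, and the span above is the cyclic submodule $A f_0$. The element $e := \sigma_0$ is a central idempotent with $e\sigma_a = \sigma_a e = e$, which splits $A \cong \mathbb{F}_p \times \mathbb{F}_p[\mathbb{F}_q^{*}]$. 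This mirrors the decomposition of a unary function into its value at $0$ and its restriction to $\mathbb{F}_q\setminus\{0\}$: the factor $eA \cong \mathbb{F}_p$ governs the constant part, while $(1-e)A \cong \mathbb{F}_p[\mathbb{F}_q^{*}] \cong \mathbb{F}_p[x]/(x^{q-1}-1)$ acts on $\mathbb{F}_p^{\mathbb{F}_q\setminus\{0\}}$ exactly as the shift transformation of minimal polynomial $x^{q-1}-1$ underlying the characterisation in Theorem~\ref{ThmShapeL1}. Both factors are quotients of a field or of the PID $\mathbb{F}_p[x]$, hence principal ideal rings, so $A$ is a principal ideal ring.

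Now I would identify $\mathbb{F}_p^{\mathbb{F}_q}$ as a cyclic $A$-module. Under the splitting it decomposes as $\mathbb{F}_p \oplus \mathbb{F}_p^{\mathbb{F}_q\setminus\{0\}}$, the first summand being cyclic over $\mathbb{F}_p$ and the second being the regular representation of the cyclic group $\mathbb{F}_q^{*}$, hence $\cong \mathbb{F}_p[x]/(x^{q-1}-1)$ and cyclic over the second factor; a module over a product of rings is cyclic as soon as each component is, so $\mathbb{F}_p^{\mathbb{F}_q} \cong A$ is cyclic (the dimensions agree, both equal $q$). The decisive structural fact is then that over a principal ideal ring every submodule of a cyclic module is cyclic: writing the cyclic module as $A/I$, its submodules are $J/I$ with $I \subseteq J$ an ideal, and $J = (j)$ principal forces $J/I = A(j+I)$. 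Applying this to the submodule $C^{[1]}$ yields $C^{[1]} = A f_0$ for a single $f_0$.

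Finally I would assemble the pieces: the unary part of $\langle f_0\rangle$ equals $A f_0 = C^{[1]}$ by the first paragraph, so Theorem~\ref{Th1} gives $\langle f_0\rangle = C$, proving that $C$ is generated by the one unary function $f_0$. I expect the main obstacle to be the bookkeeping at $0$, that is, correctly accounting for the constant functions: a naive identification of unary functions with $\mathbb{F}_p^{\mathbb{F}_q\setminus\{0\}}$ discards the value at $0$, and one must verify that the central idempotent $\sigma_0$ cleanly separates the constant part before the cyclicity argument applies. Isolating this as the $\mathbb{F}_p$-factor of $A$ is exactly what lets the principal-ideal-ring argument run uniformly, avoiding a separate case analysis according to whether $C$ contains nonzero constants.
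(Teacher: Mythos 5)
Your proof is correct, and it follows the same skeleton as the paper's: reduce to the unary part via Theorem~\ref{Th1}, and then show that this unary part is a cyclic submodule of a cyclic module. Where you genuinely diverge is in how the value at $0$ is handled. The paper first passes to the $0$-preserving part $C_0$, identifies $C_0^{[1]}$ with an $A(p,q)$-invariant subspace of $\mathbb{F}_p^{\mathbb{F}_q\setminus\{0\}}$ (through Corollary~\ref{CorModuleIso}, Lemma~\ref{LemUnaryIso} and the map $\pi_{\geq 2}$), obtains a single generator from Remark~\ref{Rem} (the gcd argument for $T$-cyclic subspaces), and then finishes with a two-case analysis: if $C_0=C$ the generator $f$ suffices, and otherwise one takes $h=f+1$. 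You instead absorb the constants into the ring from the start: the absorbing element $\sigma_0$ of the monoid algebra $\mathbb{F}_p[(\mathbb{F}_q,\cdot)]$ is a central idempotent splitting it as $\mathbb{F}_p\times\mathbb{F}_p[x]/(x^{q-1}-1)$, a principal ideal ring, over which $\mathbb{F}_p^{\mathbb{F}_q}$ is the regular (hence cyclic) module, so every submodule --- in particular $C^{[1]}$ --- is cyclic in one stroke. This buys you a uniform statement with no case split and makes the ``$f+1$'' trick appear as the component of the generator in the $\mathbb{F}_p$-factor; the cost is that you must verify the ring-theoretic facts (product of principal ideal rings is a principal ideal ring, submodules of cyclic modules over such rings are cyclic) that the paper replaces by the concrete Remark~\ref{Rem}. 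Both routes rest on the same computation that $\mathbb{F}_p^{\mathbb{F}_q\setminus\{0\}}$ is the regular representation of the cyclic group $\mathbb{F}_q^{\times}$. One small remark: in your final assembly you only need the inclusion $Af_0\subseteq\Cid(\{f_0\})^{[1]}$, which is immediate from the definition; the exact description of $\Cid(\{f_0\})$ as $\{\sum_j c_j f_0(\ell_j)\}$, while correct, is not required.
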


The proof of this result is given in Section \ref{TheLattice}. With Theorem \ref{Th1} and the characterization of the invariant subspaces lattice of a cyclic linear transformation over a finite-dimensional vector space in \cite{BF.TISL}, we obtain a description of the lattice of all $(\mathbb{F}_p,\mathbb{F}_q)$-linearly closed clonoids as a direct product of chains (Section \ref{TheLattice}). 

The structure of the lattice of all $(\mathbb{F}_p,\mathbb{F}_q)$-linearly closed clonoids depends on the prime factorization of the polynomial $g = x^{q-1} -1$ in $\mathbb{F}_p[x]$. Once this factorization is known, it is easy to find this lattice. Let us denote by $\mathbf{2}$ the two-element chain and, in general, by $\mathbf{C}_k$ the chain with $k$ elements. Moreover, we denote by $\mathcal{L}(p,q)$ the lattice of all $(\mathbb{F}_p,\mathbb{F}_q)$-linearly closed clonoids.

\begin{Thm}
	\label{ThmShapeL1}
	Let $p$ and $q$ be powers of different primes. Let $\prod_{i =1}^n p_i^{k_i}$ be the factorization of the polynomial $g = x^{q-1} -1$ in $\mathbb{F}_p[x]$ into its irreducible divisors. Then the number of distinct $(\mathbb{F}_p,\mathbb{F}_q)$-linearly closed clonoids is $2\prod_{i =1}^n(k_i +1)$ and the lattice of all $(\mathbb{F}_p,\mathbb{F}_q)$-linearly closed clonoids, $\mathcal{L}(p,q)$, is isomorphic to
		
	\begin{center}
		$\mathbf{2} \times \prod_{i =1}^n \mathbf{C}_{k_i+1}$.
	\end{center}
\end{Thm}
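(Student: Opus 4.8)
The plan is to push everything down to unary functions and then invoke the theory of invariant subspaces of a single cyclic operator. By Theorem~\ref{Th1} the assignment $C \mapsto C^{[1]}$ identifies $\mathcal{L}(p,q)$ with the lattice $\mathcal{U}$ of all $\mathbb{F}_p$-subspaces $U$ of $\mathbb{F}_p^{\mathbb{F}_q}$ that are closed under the unary instances of Definition~\ref{DefClo}(2), i.e. under $f \mapsto f(a \cdot {-})$ for every $a \in \mathbb{F}_q$; I would first establish the theorem for $\mathcal{U}$ and then transport it back. (Condition (1) is just the requirement that $U$ be an $\mathbb{F}_p$-subspace.)

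To analyse $\mathcal{U}$, write $\mathbb{F}_p^{\mathbb{F}_q} = \mathbb{F}_p \cdot \mathbf{1} \oplus W$, where $\mathbf{1}$ is the constant function $1$ and $W = \{f : f(0) = 0\} \cong \mathbb{F}_p^{\mathbb{F}_q \setminus \{0\}}$. For $a \neq 0$ the map $f \mapsto f(a\cdot{-})$ fixes $f(0)$ and permutes the nonzero arguments, while for $a = 0$ it sends $f$ to the constant function with value $f(0)$. Fixing a generator $\zeta$ of the cyclic group $\mathbb{F}_q^\times$ and letting $T \colon W \to W$ be the operator $f \mapsto f(\zeta\cdot{-})$, I would observe that multiplication by $\zeta$ is a single $(q-1)$-cycle on $\mathbb{F}_q^\times$; enumerating $\mathbb{F}_q^\times$ along this cycle identifies $T$ with the companion matrix of $x^{q-1}-1$. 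Hence $T$ is cyclic (non-derogatory), and its minimal polynomial equals its characteristic polynomial $x^{q-1}-1 = g$, exactly as announced in the abstract. (No separability is needed here; the multiplicities $k_i$ simply record when $\mathrm{char}\,\mathbb{F}_p$ divides $q-1$.)

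The next step is a splitting of each $U \in \mathcal{U}$. Applying Definition~\ref{DefClo}(2) with $a = 0$ shows that the constant function with value $f(0)$ lies in $U$ for every $f \in U$, so the constants in $U$ form a subspace $K$ of the one-dimensional space $\mathbb{F}_p \cdot \mathbf{1}$; thus $K \in \{\{0\},\,\mathbb{F}_p \cdot \mathbf{1}\}$. Setting $W_U := U \cap W$, the actions with $a \neq 0$ are precisely the powers of $T$, so $W_U$ is $T$-invariant, and the decomposition $f = (\text{its }0\text{-value constant}) + (\text{remainder in } W)$ gives $U = K \oplus W_U$. Conversely, any pair consisting of $K \in \{\{0\},\,\mathbb{F}_p \cdot \mathbf{1}\}$ and a $T$-invariant subspace of $W$ assembles into a member of $\mathcal{U}$ (the $a=0$ closure holds because functions in $W$ vanish at $0$). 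This correspondence is inclusion-preserving in both directions, hence a lattice isomorphism $\mathcal{U} \cong \mathbf{2} \times \mathrm{Inv}(T)$, where $\mathrm{Inv}(T)$ denotes the lattice of $T$-invariant subspaces of $W$.

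Finally I would invoke \cite{BF.TISL}: for a cyclic operator the invariant-subspace lattice is isomorphic to the lattice of monic divisors of its minimal polynomial. Writing $g = \prod_{i=1}^n p_i^{k_i}$, the monic divisors are the products $\prod_{i} p_i^{j_i}$ with $0 \le j_i \le k_i$, so $\mathrm{Inv}(T) \cong \prod_{i=1}^n \mathbf{C}_{k_i+1}$. Combining this with the previous paragraph and Theorem~\ref{Th1} yields $\mathcal{L}(p,q) \cong \mathbf{2} \times \prod_{i=1}^n \mathbf{C}_{k_i+1}$, and counting elements gives $2\prod_{i=1}^n(k_i+1)$ clonoids. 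The main obstacle I anticipate lies in the structural core of the second and third paragraphs: verifying that $T$ is genuinely cyclic with minimal polynomial $x^{q-1}-1$, and cleanly isolating the extra factor $\mathbf{2}$ contributed by the constant functions. Once these are in place, the statement follows by a direct application of the cited invariant-subspace description.
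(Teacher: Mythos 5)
Your overall architecture coincides with the paper's: reduce to unary parts, split off the one-dimensional space of constants to produce the factor $\mathbf{2}$, realize the remaining action on $\{f : f(0)=0\}\cong\mathbb{F}_p^{\mathbb{F}_q\setminus\{0\}}$ as a cyclic operator with minimal polynomial $x^{q-1}-1$ (the paper's $A(p,q)$, your $T$), and then apply \cite{BF.TISL}. Your direct-sum splitting $U = K \oplus W_U$ is a slightly cleaner packaging of the paper's Lemma \ref{LemUnaryIso} (which uses the modular law on the pair $V_0, V_1$), and your appeal to the divisor lattice of the minimal polynomial is equivalent to the paper's route through the primary decomposition and the chain criterion of \cite[Lemma 2]{BF.TISL}. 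All of that is fine.

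There is, however, one genuine gap in your first paragraph. Theorem \ref{Th1} (equivalently Corollary \ref{Cor2}) only gives \emph{injectivity} of $C \mapsto C^{[1]}$: distinct clonoids have distinct unary parts. It does not tell you that the map is \emph{surjective} onto your lattice $\mathcal{U}$, i.e.\ that every $\mathbb{F}_p$-subspace $U \subseteq \mathbb{F}_p^{\mathbb{F}_q}$ closed under $f \mapsto f(a\cdot{-})$ actually arises as the unary part of some $(\mathbb{F}_p,\mathbb{F}_q)$-linearly closed clonoid. The natural candidate is $\Cid(U)$, but a priori closing $U$ under the higher-arity operations of Definition \ref{DefClo} and then restricting back to arity one could produce new unary functions, in which case $\mathcal{L}(p,q)$ would only embed into $\mathcal{U}$ and the count $2\prod_i(k_i+1)$ could be an overestimate. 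The paper closes this gap with Lemma \ref{LemPoly}: for $U$ the subspace of $\mathbb{F}_q^{\mathbb{F}_q}$ generated by the identity and $V$ an admissible subspace of $\mathbb{F}_p^{\mathbb{F}_q}$, the polymorphism clonoid $\Pol(U,V)$ is an $(\mathbb{F}_p,\mathbb{F}_q)$-linearly closed clonoid whose unary part is exactly $V$; since $\Cid(V) \subseteq \Pol(U,V)$, this pins down $\Cid(V)^{[1]} = V$ and yields the lattice isomorphism of Corollary \ref{CorModuleIso}. You need some such argument (a relational/polymorphism construction, or a direct verification that the generation process preserves unary parts) before the identification $\mathcal{L}(p,q)\cong\mathcal{U}$ — and hence the final count — is legitimate. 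The remainder of your proof then goes through as written.
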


\section{Preliminaries and notations}\label{Preliminaries}

We use boldface letters for vectors, e. g., $\mathbfsl{u} = (u_1,\dots,u_n)$ for some $n \in \NN$. Moreover, we will use $\langle\mathbfsl{v}, \mathbfsl{u}\rangle$ for the scalar product of the vectors $\mathbfsl{v}$ and $\mathbfsl{u}$.

We write $\Cid(S)$ for the $(\mathbb{F}_p,\mathbb{F}_q)$-linearly closed clonoid generated by a set of functions $S$. 
The $(\mathbb{F}_p,\mathbb{F}_q)$-linearly closed clonoids form a lattice with the intersection as meet and the $(\mathbb{F}_p,\mathbb{F}_q)$-linearly closed clonoid generated by the union as join. The top element of the lattice is the $(\mathbb{F}_p,\mathbb{F}_q)$-linearly closed clonoid of all functions and the bottom element consists of only the constant zero functions. Let $f$ be an $n$-ary function from a group $\mathbf{G}_1$ to a group $\mathbf{G}_2$. We say that $f$ is \emph{$0$-preserving} if $f(0_{\mathbf{G}_1},\dots,0_{\mathbf{G}_1})	= 0_{\mathbf{G}_2}$. 

As examples of non-trivial $(\mathbb{F}_p,\mathbb{F}_q)$-linearly closed clonoids we can see that the set of all $0$-preserving finitary functions from $\mathbb{F}_q$ to $\mathbb{F}_p$ forms an $(\mathbb{F}_p,\mathbb{F}_q)$-linearly closed clonoid and that the following set of functions forms an $(\mathbb{F}_p,\mathbb{F}_q)$-linearly closed clonoid.

\begin{Def}
	\theoremstyle{definition}
	\label{Def6}
	Let $p$ and $q$ be powers of primes and let $f$ be a function from $\mathbb{F}^n_q$ to $\mathbb{F}_p$. The function $f$ is a \emph{star function} if and only if for every vector $\mathbfsl{w} \in \mathbb{F}^n_q$ there exists $k \in \mathbb{F}_p$ such that for every $\lambda \in \mathbb{F}_q \backslash \{0\}$:
	
	\begin{center}
		$f(\lambda \mathbfsl{w}) = k$.
	\end{center}
	
\end{Def}

It is easy to see that the star functions form an $(\mathbb{F}_p,\mathbb{F}_q)$-linearly closed clonoid for every $p$ and $q$ and they represent an instance of the nice behaviour that the $(\mathbb{F}_p,\mathbb{F}_q)$-linearly closed clonoids have in relation to the lines of the space $\mathbb{F}_q^n$. Indeed, the composition with scalar multiplications from the right hand side can be used to permute the values that functions of $(\mathbb{F}_p,\mathbb{F}_q)$-linearly closed clonoid have in lines that pass through the origin.

\section{Preliminaries from linear algebra}\label{PreliminariesLA}

In this section we review some concepts of linear algebra that we need in order to find a description of the lattice of all $(\mathbb{F}_p,\mathbb{F}_q)$-linearly closed clonoids, where $p$ and $q$ are powers of distinct primes.
We recall that a \emph{T-invariant subspace} of a linear operator $T$ of a vector space $V$ is a subspace $W$ of $V$ that is preserved by $T$; that is, $T(W) \subseteq W$.
Let $S$ be a set of linear operators of a vector space $V$. We can consider the $S$-invariant subspaces lattice of $V$ and we denote it by $\mathcal{L}(S)$.
	
In Section \ref{TheLattice} we will see that the problem to find the structure of the lattice of all $(\mathbb{F}_p,\mathbb{F}_q)$-linearly closed clonoids can be reduced to the problem to find all $T$-invariant subspaces of the vector space $\mathbb{F}_p^{\mathbb{F}_q\backslash\{0\}}$, where $T$ is certain linear transformation that permutes the components of $\mathbb{F}_p^{\mathbb{F}_q\backslash\{0\}}$.
In \cite{BF.TISL} the structure of the invariant subspaces lattice of a linear transformation on a finite-dimensional vector space over an arbitrary field has been studied, and in \cite{Fri.TNOI} the number of invariant subspaces of a finite vector space with respect to a linear operator is determined.
 
Let $T$ be a linear transformation on a finite-dimensional vector space $V$ over a field $\mathbb{K}$ and let $g$ be the minimal polynomial of $T$. We call $T$ \emph{primary} if $g = f^c$ for some irreducible polynomial $f$ and some positive integer $c$. We know from \cite[Theorem $1$]{BF.TISL} that, with the prime factorization of $g = \prod_{i = 1}^sp_i^{k_i}$ over $\mathbb{K}[\mathbfsl{x}]$, we can split the vector space $V$ into what is called its \emph{primary decomposition}:

\begin{center}
	$V =\bigoplus_{i = 1}^s V_i$,
\end{center}
and $V_i = ker(p_i(T)^{k_i})$ are called the \emph{primary components} of $V$. According to \cite{BF.TISL}, the lattice $\mathcal{L}(T)$ of the  $T$-invariant subspaces of $V$ is a direct product of the lattices $\mathcal{L}(T_i)$, where $T_i = T|_{V_i}$. Thus:

\begin{center}
	$\mathcal{L}(T) = \prod_{i=1}^s\mathcal{L}(T_i)$.
\end{center}

\begin{Def}
	\theoremstyle{definition}
	\label{DefCyclic}
	Let $V$ be a vector space, let $M$ be a subspace of $V$, and let $T$ be a linear transformation. If $M$ is generated by $\{x, Tx, T^2x,\dots \}$ for some $x \in V$, then $T$ is called \emph{cyclic}, $M$ is called a $T$-\emph{cyclic subspace}, and $x$ is called a $T$-\emph{cyclic vector} for $M$.
\end{Def}

 \begin{Rem}
 	\label{Rem}
 	Let $V$	be a finite dimensional vector space over a field $\mathbb{K}$ and let $T: V \rightarrow V$ be a linear operator such that $V$ is $T$-cyclic. Then every $T$-invariant subspace of $V$ is $T$-cyclic.
 \end{Rem}
 
 \begin{proof}
 	Let $W = \langle \mathbfsl{w}_1,\dots,\mathbfsl{w}_n\rangle$ be a $T$-invariant subspace of $V$ and let $\mathbfsl{v}$ be a $T$-cyclic vector of $V$. Then there exist $p_1,\dots,p_n \in \mathbb{K}[x]$ such that $\mathbfsl{w}_i = p_i(T)\mathbfsl{v}$ for all $i \in \{1, \dots,n\}$. Let $d = gcd(p_1,\dots,p_n)$. Then $W =\{q(T)(d(T)\mathbfsl{v}) \mid q \in \mathbb{K}[x]\}$. Thus $W$ is $T$-cyclic with  $T$-cyclic vector $d(T)\mathbfsl{v}$.
 \end{proof}

In \cite[Lemma $2$]{BF.TISL} it is proved that $\mathcal{L}(T)$ is a chain if and only if $T$ is cyclic and primary. In particular they show that if the minimal polynomial of $T$ is $g = f^n$, with $f$ irreducible, then:

\begin{center}

$\mathcal{L}(T) = \{ker(f(T)^ k)\mid k \in \{0, 1, . . . ,n\}\}$.

\end{center}

\section{Generators of  $(\mathbb{F}_p,\mathbb{F}_q)$-linearly closed clonoids}\label{AllGen}

In this section our aim is to find a set of unary generators of an $(\mathbb{F}_p,\mathbb{F}_q)$-linearly closed clonoid. In general we will see that it is the unary part of an $(\mathbb{F}_p,\mathbb{F}_q)$-linearly closed clonoid that determines the clonoid. To this end we shall show the following Lemma.

\begin{Lem}
\label{Lem1}
Let $f,g: \mathbb{F}_q^n \to \mathbb{F}_p$ be functions such that there exists $\mathbfsl{b} \in \mathbb{F}_q^n \backslash$ $\{(0,\dots,0)\}$ with $f(\lambda\mathbfsl{b}) =  g(\lambda(1,0,\dots,0))$ for all $\lambda \in \mathbb{F}_q$ and $f(\mathbfsl{x}) = g(\mathbfsl{y}) = 0$ for all $\mathbfsl{x} \in \mathbb{F}_q^n \backslash \{\lambda\mathbfsl{b} \mid \lambda \in \mathbb{F}_q\}$ and $\mathbfsl{y} \in \mathbb{F}_q^n\backslash \{\lambda(1,0,\dots,0) \mid \lambda \in \mathbb{F}_q\}$. Then $f \in \Cid(\{g\})$.
\end{Lem}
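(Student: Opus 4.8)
The plan is to realise $f$ as a single right-composition of $g$ with an invertible $\mathbb{F}_q$-linear map, so that membership in $\Cid(\{g\})$ follows immediately from property (2) of Definition \ref{DefClo}. Write $L_{\mathbfsl{b}} = \{\lambda\mathbfsl{b} \mid \lambda \in \mathbb{F}_q\}$ and $L_0 = \{\lambda(1,0,\dots,0) \mid \lambda \in \mathbb{F}_q\}$ for the two lines on which $f$ and $g$, respectively, are supported. The linear-combination closure (property (1)) will not even be needed.

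First I would construct the map. Since $\mathbfsl{b} \neq (0,\dots,0)$, I can extend $\mathbfsl{b}$ to an ordered basis $(\mathbfsl{b}, \mathbfsl{v}_2, \dots, \mathbfsl{v}_n)$ of $\mathbb{F}_q^n$ and let $A \in \mathbb{F}_q^{n \times n}$ be the invertible matrix sending this basis to the standard basis; in particular $A\mathbfsl{b} = (1,0,\dots,0)^t$. Invertibility is the feature I will exploit: since $A^{-1}$ is a bijection mapping the one-dimensional space $L_0$ onto the one-dimensional space containing $A^{-1}(1,0,\dots,0)^t = \mathbfsl{b}$, we get $A^{-1}(L_0) = L_{\mathbfsl{b}}$, equivalently $A\mathbfsl{x} \in L_0$ if and only if $\mathbfsl{x} \in L_{\mathbfsl{b}}$.

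Next I would verify $f(\mathbfsl{x}) = g(A\mathbfsl{x})$ for every $\mathbfsl{x} \in \mathbb{F}_q^n$ by splitting into two cases. If $\mathbfsl{x} = \lambda\mathbfsl{b} \in L_{\mathbfsl{b}}$, then $A\mathbfsl{x} = \lambda A\mathbfsl{b} = \lambda(1,0,\dots,0)^t$, and the hypothesis $f(\lambda\mathbfsl{b}) = g(\lambda(1,0,\dots,0))$ gives $g(A\mathbfsl{x}) = f(\mathbfsl{x})$. If $\mathbfsl{x} \notin L_{\mathbfsl{b}}$, then by the observation above $A\mathbfsl{x} \notin L_0$, so $g(A\mathbfsl{x}) = 0$ because $g$ vanishes off $L_0$, while $f(\mathbfsl{x}) = 0$ because $f$ vanishes off $L_{\mathbfsl{b}}$. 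Hence $g(A\mathbfsl{x}) = f(\mathbfsl{x})$ in both cases, so $f = g \circ A$ as maps $\mathbb{F}_q^n \to \mathbb{F}_p$.

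Finally, applying property (2) of Definition \ref{DefClo} to $g \in \Cid(\{g\})$ and the matrix $A$ produces exactly the function $\mathbfsl{x} \mapsto g(A\mathbfsl{x}) = f(\mathbfsl{x})$, whence $f \in \Cid(\{g\})$. The only genuine point to get right is that right-composition must carry the $L_{\mathbfsl{b}}$-supported function into an $L_0$-supported one without leaking any nonzero value off the target line; this is precisely what choosing $A$ invertible guarantees, via the identity $A^{-1}(L_0) = L_{\mathbfsl{b}}$. The matching behaviour \emph{on} the line is then immediate from the stated agreement of $f$ and $g$ along $\mathbfsl{b}$ and $(1,0,\dots,0)$.
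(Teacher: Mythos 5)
Your proof is correct and takes essentially the same route as the paper: both realise $f$ as $g$ precomposed with a single linear substitution that maps the line through $\mathbfsl{b}$ onto the line through $(1,0,\dots,0)$ and everything else off that line, then invoke property (2) of Definition \ref{DefClo}. The paper builds this substitution explicitly from $b_i^{-1}x_i$ and linear forms vanishing exactly on $L_{\mathbfsl{b}}$, whereas you obtain the same (invertible) map by extending $\mathbfsl{b}$ to a basis; the difference is purely presentational.
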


\begin{proof}
Let $n \in \NN$ and $\mathbfsl{b} = (b_1,\dots,b_n) \in \mathbb{F}_q^n \backslash\{(0,\dots,0)\}$. Let $1 \leq i \leq n$ be such that $b_i \not= 0$ and let $f,g: \mathbb{F}_q^n \to \mathbb{F}_p$ be functions as in the hypothesis. Moreover, let $L = \{s\mathbfsl{b} \mid s \in \mathbb{F}_q\}$ be the line of the space $\mathbb{F}_q^n$ generated by the vector $\mathbfsl{b}$. Let us consider $\mathbfsl{l}_j \in \mathbb{F}_q^n$ for $1 \leq j \leq n-1$ such that the solutions of the system formed by the equations $(\langle\mathbfsl{l}_j,\mathbfsl{y}\rangle = 0)_{1 \leq j \leq n-1}$ describe the line $L$ of $\mathbb{F}_q^n$.
Then:
\begin{center}
 $g(b_i^{-1}x_i, \langle\mathbfsl{l}_1,\mathbfsl{x}\rangle,\dots,\langle\mathbfsl{l}_{n-1},\mathbfsl{x}\rangle) = f(\mathbfsl{x})$ \ \ \ \ \ \  for all $\mathbfsl{x} = (x_1,\dots,x_n)\in \mathbb{F}_q^n$.
 \end{center}
Hence $g \in \Cid(\{g\})$ and the claim holds.

\end{proof}

In order to show the main theorem of the section we introduce the definition of \emph{Lagrange interpolation functions}, which are functions built to have a value different from zero only in one point, and they can be seen as characteristic functions of a point in the vector space $\mathbb{F}_q^n$ with codomain $\{0,1\} \subseteq \mathbb{F}_p$.

\begin{Def}
	\label{Def9}
	Let $\mathbfsl{a} = (a_1,\dots,a_n) \in \mathbb{F}^n_q$. The \emph{$n$-ary Lagrange interpolation function} $f_{\mathbfsl{a}}$ from $\mathbb{F}_q$ to $\mathbb{F}_p$ is the function defined by:
	
	\begin{center}
		$\begin{array}{cccc}
		f(\mathbfsl{a}) &= &1
		\\f(\mathbfsl{x}) &= &0 &\ \ \ \ \ \text{for} \  \mathbfsl{x} \in \mathbb{F}_q^n \backslash\{\mathbfsl{a}\}.	
		\end{array}$
		
	\end{center}
	
\end{Def}

We are now ready to prove that an $(\mathbb{F}_p,\mathbb{F}_q)$-linearly closed clonoid $C$ is generated by its unary part.

\begin{Thm}
	\label{Th1}	
	Let $p$ and $q$ be powers of different primes. Then every $(\mathbb{F}_p,\mathbb{F}_q)$-linearly closed clonoid $C$ is generated by its unary functions. Thus $C = \Cid(C^{[1]})$.
	
\end{Thm}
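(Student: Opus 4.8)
The inclusion $\Cid(C^{[1]})\subseteq C$ is immediate, since $C^{[1]}\subseteq C$ and $C$ is closed under the defining operations, so the content is the reverse inclusion. Fix $f\in C^{[n]}$; the goal is $f\in\Cid(C^{[1]})$. The first observation is that $C^{[1]}$ already contains many ``restrictions'' of $f$: for every $\mathbfsl{b}\in\mathbb{F}_q^n$, composing $f$ on the right with the column matrix $\mathbfsl{b}\in\mathbb{F}_q^{n\times1}$ yields the unary function $h_{\mathbfsl{b}}(x)=f(x\mathbfsl{b})$, which lies in $C^{[1]}$ by Definition \ref{DefClo}(2). Precomposing any $h_{\mathbfsl{b}}$ with the zero map produces the constant function $f(\mathbf{0})$, so if $f(\mathbf{0})\neq0$ then all constants lie in $\Cid(C^{[1]})$ and I may subtract $f(\mathbf{0})$ to reduce at once to the case where $f$ is $0$-preserving.

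Next I would split $f$ according to the lines through the origin of $\mathbb{F}_q^n$. Writing $f=\sum_{L}\tilde f_L$, where $L$ ranges over these lines and $\tilde f_L$ agrees with $f$ on $L$ and vanishes off $L$, reduces the problem to showing $\tilde f_L\in\Cid(C^{[1]})$ for each $L=\langle\mathbfsl{b}\rangle$, because $\Cid(C^{[1]})$ is closed under $\mathbb{F}_p$-linear combinations. Lemma \ref{Lem1} is exactly the tool to move such a single-line function to the coordinate line: it gives $\tilde f_L\in\Cid(\{g_L\})$, where $g_L$ is supported on $\{\lambda(1,0,\dots,0)\}$ with the matching profile $g_L(\lambda(1,0,\dots,0))=f(\lambda\mathbfsl{b})=h_{\mathbfsl{b}}(\lambda)$. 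Thus everything reduces to a single claim: for a unary $\phi\in C^{[1]}$, the $n$-ary ``extend by zero along the coordinate line'' function $E(\phi)$, defined by $E(\phi)(\lambda,0,\dots,0)=\phi(\lambda)$ and $E(\phi)=0$ off that line, belongs to $\Cid(\{\phi\})$.

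To build $E(\phi)$ I would exploit that $p$ and $q$ are coprime, so $q$ is invertible in $\mathbb{F}_p$. Consider the functions $\mathbfsl{x}\mapsto\phi\bigl(x_1+\langle\mathbfsl{c},(x_2,\dots,x_n)\rangle\bigr)$ for $\mathbfsl{c}\in\mathbb{F}_q^{n-1}$; each is $\phi$ composed with a linear form, hence lies in $\Cid(\{\phi\})$. Averaging them with the scalar $q^{-(n-1)}\in\mathbb{F}_p$ produces a function $A$ that equals $\phi(x_1)$ on the coordinate line, where all the inner products vanish, and equals the constant $q^{-1}\Sigma$ off it, where $\Sigma=\sum_{u\in\mathbb{F}_q}\phi(u)$, since for $(x_2,\dots,x_n)\neq\mathbf{0}$ the form $\langle\mathbfsl{c},\cdot\rangle$ is equidistributed over $\mathbb{F}_q$. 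Hence $A=E(\phi)+q^{-1}\Sigma\,\mathbfsl{s}$, where $\mathbfsl{s}$ is the star function that is $1$ off the coordinate line and $0$ on it; in the balanced case $\Sigma\equiv0\pmod p$ this gives $E(\phi)=A\in\Cid(\{\phi\})$ outright.

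The main obstacle is exactly the correction term $q^{-1}\Sigma\,\mathbfsl{s}$ in the non-balanced case $\Sigma\not\equiv0$, and replacing the plain average by arbitrary weighted sums does not help: a short counting argument shows that no $\mathbb{F}_p$-combination of the forms in $\phi$ can vanish off the line while being nonzero on it once $\Sigma\not\equiv0$, so the star function $\mathbfsl{s}$ (equivalently, the line-indicator functions appearing when the corrections are collected across all $L$) must itself be generated inside $\Cid(C^{[1]})$. I would resolve this delicate point by treating the star functions separately, producing them through the same coprimality-driven averaging device together with the fact, noted after Definition \ref{Def6}, that the star functions form an $(\mathbb{F}_p,\mathbb{F}_q)$-linearly closed clonoid on which scalar multiplication from the right permutes the line-values; once the needed star and constant functions are placed in $\Cid(C^{[1]})$, combining this with the line decomposition of the second paragraph yields $f\in\Cid(C^{[1]})$, and therefore $C=\Cid(C^{[1]})$. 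I expect this handling of the non-balanced part, where the arithmetic of $\Sigma\bmod p$ and the invertibility of $q$ in $\mathbb{F}_p$ are essential, to be the crux of the proof.
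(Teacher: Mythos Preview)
Your overall architecture matches the paper's: reduce to a $0$-preserving $f$, decompose $f$ along the lines through the origin, use Lemma~\ref{Lem1} to transport each line-piece to the first coordinate axis, and then show that the ``extend by zero along the axis'' function $E(\phi)$ lies in $\Cid(\{\phi\})$ for the relevant unary $\phi\in C^{[1]}$. The paper carries out this last step by an induction on the arity; your one-shot average over $\mathbb{F}_q^{\,n-1}$ is a legitimate variant of the same idea.

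The genuine gap is the non-balanced case $\Sigma\not\equiv0$. Your proposed resolution is not an argument: that the star functions form a clonoid is a \emph{closure} statement and gives no way to \emph{generate} $\mathbfsl{s}$ from $\phi$; the remark after Definition~\ref{Def6} is irrelevant here. More to the point, your ``short counting argument'' that no $\mathbb{F}_p$-combination of functions $\phi\circ(\text{linear form})$ can realise $E(\phi)$ when $\Sigma\not\equiv0$ is only valid if you artificially restrict to linear forms whose $x_1$-coefficient equals $1$. Drop that restriction and the fix is immediate: with
\[
B(\mathbfsl{x})\;=\;q^{-(n-1)}\sum_{\mathbfsl{c}\in\mathbb{F}_q^{\,n-1}}\phi\bigl(\langle\mathbfsl{c},(x_2,\dots,x_n)\rangle\bigr)\ \in\ \Cid(\{\phi\}),
\]
one has $B=0$ on the axis (since $\phi(0)=0$) and $B=q^{-1}\Sigma$ off it, so $B=q^{-1}\Sigma\,\mathbfsl{s}$ and $A-B=E(\phi)$. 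No separate production of star or constant functions is needed.

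This subtraction is exactly the paper's device, written there inductively: the step from $t_{m-1}$ to $t_m$ is
\[
q\,t_m(\mathbfsl{x})\;=\;\sum_{a\in\mathbb{F}_q}t_{m-1}(x_1-ax_2,x_3,\dots,x_m)\;-\;\sum_{a\in\mathbb{F}_q\setminus\{0\}}t_{m-1}(ax_2,x_3,\dots,x_m),
\]
where the second sum --- a function of $(x_2,\dots,x_m)$ only --- plays the role of your missing $B$ and cancels the off-axis contribution. Unwinding the induction recovers precisely $A-B$; once you supply $B$, your argument and the paper's coincide.
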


\begin{proof}
 The inclusion $\supseteq$ is obvious. For the other inclusion let $C$ be an $(\mathbb{F}_p,\mathbb{F}_q)$-linearly closed clonoid and let $f$ be an $n$-ary function in $C$. In order to prove that $f \in \Cid(C^{[1]})$ we show that $f' = f - f(\mathbfsl{0})$ is in $\Cid(C^{[1]})$, where $f(\mathbfsl{0})$ is the constant $n$-ary function with value $f(\mathbfsl{0})$. This implies the claim because the $n$-ary constant function with value $f(\mathbfsl{0})$ is in $\Cid(C^{[1]})$ by Definition \ref{DefClo}. We can see that $f'$ is a $0$-preserving function of $C$. The strategy is to interpolate  $f'$ in every line passing through the origin. To this end, let $R = \{L_i \mid 1 \leq i \leq (q^{n} - 1)/(q-1) = s\}$ be the set of all $s$ distinct lines of the space $\mathbb{F}_q^n$ that pass through the origin, parametrized by the vectors $\mathbfsl{l}_i \in \mathbb{F}_q^n$ with $i \in \{1,\dots s\} = I$. For all $i \in I$, let $f_{L_i}: \mathbb{F}_q^n \to \mathbb{F}_p$ be defined by:
 
\begin{center}
		$\begin{array}{cccc}
		f_{L_i}(\lambda\mathbfsl{l}_i) &=& f'(\lambda\mathbfsl{l}_i) & \text{for}\ \lambda \in \mathbb{F}_q\\
		f_{L_i}(\mathbfsl{x}) &=& 0 &\ \ \ \ \ \ \ \text{for\ } \mathbfsl{x} \in \mathbb{F}^n_q\backslash \{\lambda\mathbfsl{l}_i \mid \lambda \in \mathbb{F}_q\}.
		\end{array}$
	\end{center}
Since $f'$ is $0$-preserving we can write $f'$ as:\begin{center}
	$$f' = \sum_{i = 1}^{s} f_{L_i}.$$
\end{center}
To prove that $f \in \Cid(C^{[1]})$ it is therefore sufficient to show that $f_{L_i} \in \Cid(C^{[1]})$ for all $L_i \in R$. Let $i \in I$ and let $g: \mathbb{F}_q \to \mathbb{F}_p$ be a function such that $f_{L_i}(x\mathbfsl{l}_i) = g(x) = f'(x\mathbfsl{l}_i)$. Then we prove by induction on the arity $m$ that the function $t_m: \mathbb{F}_q^m \to \mathbb{F}_p$ defined by: 

\begin{center}
	$\begin{array}{ccc}
	t_m(x,0,\dots,0) =& g(x) &\ \ \ \ \text{for all\ } x \in \mathbb{F}_q\\
	t_m(x_1,\dots,x_n) =& 0 &\ \ \ \ \text{for\ } \mathbfsl{x} \in \mathbb{F}^n_q\backslash \{\lambda(1,0,\dots,0) \mid \lambda \in \mathbb{F}_q\},
	\end{array}$
\end{center}
is in $\Cid(C^{[1]})$.

Case $m =1$: if $m = 1$ then $t_1 = g$ is a unary function of $C^{[1]}$. 

Case $m>1$: by the induction hypothesis we know that $t_{m-1} \in \Cid(C^{[1]})$. We define $s_m: \mathbb{F}_q^2 \rightarrow \mathbb{F}_q^m$ by $s_m(i,j) = (i,j,0,\dots,0)$. We denote by $f_{s_m(h,k)}$ the Lagrange interpolation function of the point $s_m(h,k)$ (Definition \ref{Def9}). Let us define the function $r: \mathbb{F}_q^m \rightarrow \mathbb{F}_p$ by:

\begin{equation}\label{eq4}
\begin{split}
r(\mathbfsl{x}) = & \sum_{a \in \mathbb{F}_q} t_{m-1}(x_1 - ax_2,x_3,\dots,x_{m}) - \sum_{a \in \mathbb{F}_q \backslash \{0\}} t_{m-1}(ax_2,x_3,\dots,x_{m})\\
 = &\sum_{a,i,j \in \mathbb{F}_q}g(i)f_{s_m(i+aj,j)}(\mathbfsl{x}) - \sum_{i,j \in \mathbb{F}_q, a \in \mathbb{F}_q\backslash\{0\}}g(i)f_{s_m(j,ia^{-1})}(\mathbfsl{x})\\
 = &\sum_{a,i\in \mathbb{F}_q}g(i)f_{s_m(i,0)}(\mathbfsl{x}) + \sum_{i \in \mathbb{F}_q}g(i)\sum_{j \in \mathbb{F}_q\backslash\{0\}, a \in \mathbb{F}_q}f_{s_m(i+aj,j)}(\mathbfsl{x}) - 
 \\&-\sum_{i \in \mathbb{F}_q}g(i)\sum_{j \in \mathbb{F}_q, a \in \mathbb{F}_q\backslash\{0\}}f_{s_m(j,ia^{-1})}(\mathbfsl{x}),
  \end{split}
 \end{equation} 
for all $\mathbfsl{x} = (x_1,\dots,x_m) \in \mathbb{F}_q^m$. We prove that $r(x_1,\dots,x_m) = q t_{m} (x_1,\dots,x_m)$ for all $(x_1,\dots,x_m) \in \mathbb{F}_q^m$. We only consider the case $x_3 = \cdots = x_n = 0$. 

Case $x_2 = 0$: in this case $r$ evaluates to $q g(x_1)$ for all $x_1 \in \mathbb{F}_q$, as required. 

Case $x_2 \not= 0$: we have $r(\mathbfsl{x}) = \sum_{a \in \mathbb{F}_q} g(x_1 - ax_2 ) - \sum_{a \in \mathbb{F}_q\backslash\{0\}} g(ax_2 ) = \sum_{a \in \mathbb{F}_q} g(a) $ $- \sum_{a \in \mathbb{F}_q\backslash\{0\}} g(a) = g(0) = 0$.

 Because of (\ref{eq4}), we have $r \in \Cid(\{t_{m-1}\}) \subseteq \Cid(C^{[1]})$. Hence we have that $qt_{m} \in \Cid(C^{[1]})$ and thus $t_{m} \in \Cid(C^{[1]})$. This concludes the induction. Thus $t_{n} \in \Cid(C^{[1]})$ and we can see that $f_{L_i}(\lambda\mathbfsl{l}_i) = t_n(\lambda(1,0,\dots,0)) = g(\lambda)$, for all $\lambda \in \mathbb{F}_q$, and $f_{L_i}(\mathbfsl{x}) = t_n(\mathbfsl{y}) = 0$ for all $\mathbfsl{x} \in \mathbb{F}_q^n \backslash \{\lambda\mathbfsl{l}_i \mid \lambda \in \mathbb{F}_q\}$ and $\mathbfsl{y} \in \mathbb{F}_q^n \backslash \{\lambda(1,0,\dots,0) \mid \lambda \in \mathbb{F}_q\}$. By Lemma \ref{Lem1}, $f_{L_i} \in \Cid(\{t_n\}) \subseteq \Cid(C^{[1]})$, which concludes the proof.

\end{proof}

\begin{Thm}
	\label{Thmf1}
	Let $p$ and $q$ be two powers of distinct primes. Then the $(\mathbb{F}_p,\mathbb{F}_q)$-linearly closed clonoid of all $0$-preserving functions is generated by the unary Lagrange interpolation function $f_1$ (Definition $\ref{Def9}$). 
	
\end{Thm}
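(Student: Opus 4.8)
The plan is to establish the two inclusions separately, using Theorem \ref{Th1} to push everything down to the unary level. For the easy inclusion, observe first that $f_1$ is itself $0$-preserving, since $f_1(0) = 0$ (because $0 \neq 1$). As remarked after Definition \ref{Def6}, the set of all $0$-preserving finitary functions from $\mathbb{F}_q$ to $\mathbb{F}_p$ forms an $(\mathbb{F}_p,\mathbb{F}_q)$-linearly closed clonoid; since it contains $f_1$, it contains the whole clonoid $\Cid(\{f_1\})$ generated by $f_1$. This yields the inclusion of $\Cid(\{f_1\})$ in the clonoid of $0$-preserving functions.

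For the reverse inclusion, I would invoke Theorem \ref{Th1}: if $C$ denotes the clonoid of all $0$-preserving functions, then $C = \Cid(C^{[1]})$, so it suffices to show that every unary $0$-preserving function lies in $\Cid(\{f_1\})$. The key computation is that right composition of $f_1$ with the scalar multiplication $x \mapsto \lambda x$, for $\lambda \in \mathbb{F}_q \backslash \{0\}$, produces the Lagrange interpolation function $f_{\lambda^{-1}}$, since $\lambda x = 1$ holds exactly when $x = \lambda^{-1}$. As $\lambda$ ranges over $\mathbb{F}_q \backslash \{0\}$, the point $\lambda^{-1}$ ranges over all of $\mathbb{F}_q \backslash \{0\}$, so property (2) of Definition \ref{DefClo} guarantees that $\Cid(\{f_1\})$ contains $f_a$ for every $a \in \mathbb{F}_q \backslash \{0\}$.

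It then remains to take $\mathbb{F}_p$-linear combinations. Given any unary $0$-preserving $g : \mathbb{F}_q \to \mathbb{F}_p$, the Lagrange expansion
\[
g = \sum_{a \in \mathbb{F}_q \backslash \{0\}} g(a)\, f_a
\]
holds: at $x = 0$ every $f_a$ vanishes while $g(0) = 0$, and at $x = a \neq 0$ only the summand $f_a$ survives, contributing $g(a)$. Since each coefficient $g(a)$ lies in $\mathbb{F}_p$, property (1) of Definition \ref{DefClo} shows this combination lies in $\Cid(\{f_1\})$. Hence $C^{[1]} \subseteq \Cid(\{f_1\})$, and therefore $C = \Cid(C^{[1]}) \subseteq \Cid(\{f_1\})$, giving equality.

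I do not anticipate a genuine difficulty here, as the heavy lifting has already been done in Theorem \ref{Th1}; the only step demanding care is the verification that right composition with a nonzero scalar permutes the family $\{f_a \mid a \in \mathbb{F}_q \backslash \{0\}\}$ of unary Lagrange functions, together with the observation that the interpolation coefficients are precisely the values $g(a) \in \mathbb{F}_p$, so that the linear closure condition (1) may be applied to obtain an arbitrary unary $0$-preserving function.
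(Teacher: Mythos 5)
Your proposal is correct and follows the same route as the paper: the paper's proof is a one-line appeal to Theorem \ref{Th1} together with the observation that $\Cid(\{f_1\})$ contains every $0$-preserving unary function, and your argument simply fills in the details of that observation (right composition with nonzero scalars permutes the unary Lagrange functions $f_a$, $a \neq 0$, and $\mathbb{F}_p$-linear combinations of these give all $0$-preserving unary functions).
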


\begin{proof}
The proof follows directly from Theorem \ref{Th1} since $\Cid(f_1)$ contains every $0$-preserving unary function.
\end{proof}

The following two corollaries of Theorem \ref{Th1} tell us that there are only finitely many distinct $(\mathbb{F}_p,\mathbb{F}_q)$-linearly closed clonoids.

\begin{Cor}
	\label{Cor2}
	Let $p$ and $q$ be powers of different primes. Let $C$ and $D$ be two $(\mathbb{F}_p,\mathbb{F}_q)$-linearly closed clonoids. Then $C = D$ if and only if $C^{[1]} = D^{[1]}$.
\end{Cor}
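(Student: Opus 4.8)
The plan is to deduce both implications directly from Theorem \ref{Th1}, so that essentially all the work has already been done and this statement is a genuine corollary rather than a result requiring a fresh argument. The forward implication is immediate and purely set-theoretic: if $C = D$ as subsets of $\bigcup_{n \in \NN} \mathbb{F}_p^{\mathbb{F}_q^n}$, then intersecting both sides with the unary slice $\mathbb{F}_p^{\mathbb{F}_q}$ gives $C^{[1]} = C \cap \mathbb{F}_p^{\mathbb{F}_q} = D \cap \mathbb{F}_p^{\mathbb{F}_q} = D^{[1]}$.

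For the converse I would invoke Theorem \ref{Th1}, which asserts that every $(\mathbb{F}_p,\mathbb{F}_q)$-linearly closed clonoid equals the one generated by its unary functions. Thus $C = \Cid(C^{[1]})$ and $D = \Cid(D^{[1]})$. Assuming $C^{[1]} = D^{[1]}$ and applying the operator $\Cid$ to both sides yields $\Cid(C^{[1]}) = \Cid(D^{[1]})$, and therefore $C = D$.

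Since the only substantive ingredient is Theorem \ref{Th1}, there is no real obstacle to overcome here: once that theorem is available, the corollary follows in two short lines. The role of this statement is simply to record the useful consequence that an $(\mathbb{F}_p,\mathbb{F}_q)$-linearly closed clonoid is completely determined by its unary part, which is what subsequently reduces the classification problem to a statement about unary functions (and hence, via the later sections, to invariant subspaces of $\mathbb{F}_p^{\mathbb{F}_q \backslash \{0\}}$).
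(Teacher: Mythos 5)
Your proof is correct and follows exactly the route the paper intends: the corollary is stated without proof precisely because it follows from Theorem \ref{Th1} via $C = \Cid(C^{[1]}) = \Cid(D^{[1]}) = D$, with the forward direction being trivial. Nothing is missing.
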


\begin{Cor}
\label{Th25}
	
Let $p$ and  $q$ be powers of distinct prime numbers. Then every $(\mathbb{F}_p,\mathbb{F}_q)$-linearly closed clonoid has a set of finitely many unary functions as generators, and hence there are only finitely many distinct $(\mathbb{F}_p,\mathbb{F}_q)$-linearly closed clonoids.
	
\end{Cor}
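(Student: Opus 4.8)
The plan is to deduce everything from Theorem \ref{Th1} together with the elementary observation that there are only finitely many unary functions from $\mathbb{F}_q$ to $\mathbb{F}_p$. First I would note that the set of all unary functions $\mathbb{F}_q \to \mathbb{F}_p$ is exactly $\mathbb{F}_p^{\mathbb{F}_q}$, a set of cardinality $p^q$, hence finite. For any $(\mathbb{F}_p,\mathbb{F}_q)$-linearly closed clonoid $C$, the unary part $C^{[1]}$ is a subset of this finite set, so $C^{[1]}$ is itself finite. By Theorem \ref{Th1} we have $C = \Cid(C^{[1]})$, and therefore $C$ is generated by the finite set $C^{[1]}$ of unary functions. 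This establishes the first assertion.

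For the second assertion I would invoke Corollary \ref{Cor2}: two such clonoids $C$ and $D$ coincide if and only if $C^{[1]} = D^{[1]}$. Thus the map $C \mapsto C^{[1]}$ is injective from the collection of all $(\mathbb{F}_p,\mathbb{F}_q)$-linearly closed clonoids into the power set of the finite set $\mathbb{F}_p^{\mathbb{F}_q}$. Since this power set has at most $2^{p^q}$ elements, there can be only finitely many distinct $(\mathbb{F}_p,\mathbb{F}_q)$-linearly closed clonoids.

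There is essentially no obstacle remaining: all the substance is already carried by Theorem \ref{Th1} (that a clonoid is determined by, and generated from, its unary part) and Corollary \ref{Cor2} (injectivity of $C \mapsto C^{[1]}$), so the corollary is a packaging of these facts with the trivial finiteness of $\mathbb{F}_p^{\mathbb{F}_q}$. If a sharper statement were wanted, one could further observe that $C^{[1]}$ is not merely a subset but an $\mathbb{F}_p$-subspace of $\mathbb{F}_p^{\mathbb{F}_q}$ that is stable under right composition with scalar multiplications; restricting the count to such subspaces sharpens the bound considerably and already points toward the exact lattice description given in Theorem \ref{ThmShapeL1}.
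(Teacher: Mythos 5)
Your proof is correct and follows exactly the argument the paper intends (the corollary is stated without an explicit proof, as an immediate consequence of Theorem \ref{Th1} and Corollary \ref{Cor2}): finiteness of $\mathbb{F}_p^{\mathbb{F}_q}$ gives a finite unary generating set, and injectivity of $C \mapsto C^{[1]}$ gives finiteness of the lattice. No gaps.
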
	

\section{The lattice of all $(\mathbb{F}_p,\mathbb{F}_q)$-linearly closed clonoids}\label{TheLattice}

In this section we investigate the structure of the lattice $\mathcal{L}(p,q)$ of all $(\mathbb{F}_p,\mathbb{F}_q)$-linearly closed clonoids through a characterization of their unary parts. We call the $(\mathbb{F}_p,\mathbb{F}_q)$-linearly closed clonoids that are composed by only $0$-preserving functions \emph{$0$-preserving $(\mathbb{F}_p,\mathbb{F}_q)$-linearly closed clonoids}. We will see that there is an isomorphism between the sublattice $\mathcal{L}_0(p,q)$ of the $0$-preserving $(\mathbb{F}_p,\mathbb{F}_q)$-linearly closed clonoids and the lattice of subspaces that are invariant under a particular cyclic linear transformation $A(p,q)$ on the vector space $\mathbb{F}_p^{\FF_{q-1}}$.

In order to characterize the lattice of all $(\mathbb{F}_p,\mathbb{F}_q)$-linearly closed clonoids we need the definition of  \emph{monoid ring}.

\begin{Def} Let $\langle M, +\rangle$ be a monoid and let $\langle R, +, \odot\rangle$ be a commutative ring with identity. Let
	\begin{center}
		$S := \{f \in R^M \mid f(a) \not= 0 \text{ for only finitely many } a \in M\}.$
	\end{center}
\end{Def}

We define the \emph{monoid ring} of $M$ over $R$ as the ring $(S, +, \cdot)$. Where $+$ is the point-wise addition of functions and $(\sigma \cdot \rho)(a) := \sum_{b\in M} \sigma(b) \odot \rho(a - b)$. We denote the monoid ring of $M$ over $R$ by $R[M]$.

Using the notation of  \cite{AM.CWTR} for all $a \in M$ we define $\tau_a$ to be the element of $R^M$ with $\tau_a(a) = 1$ and $\tau_a (M\backslash\{a\}) = \{0\}$. We observe that for all $f \in R[M]$ there is an $\mathbfsl{r} \in R^M$ such that $f = \sum_{a\in M} r_a\tau_a $ and that we can multiply such expressions using the rule $\tau_a \cdot \tau_b = \tau_{a+b }$. 

\begin{Def}\label{DefAct} Let $\FF_p$ and $\FF_q$ be finite fields and let $\FF_q^{\times}= (\FF_q, \cdot)$ be the multiplicative monoid reduct of $\FF_q$. We define the action $\ast: \FF_p[\FF_q^{\times}] \times \FF_p^{\FF_q} \rightarrow \FF_p^{\FF_q}$ for all $a \in \FF_q^{\times}$ and $f \in \FF_p^{\FF_q}$ by
	
	\begin{center}
		$(\tau_a \ast  f)(x) = f(ax).$
	\end{center}
	So for $\sigma \in \FF_p[\FF_q^{\times}] $ with $\rho = \sum_{a\in \FF_q^{\times}} z_a \tau_a $, then
	
	\begin{center}
		$(\sigma  \ast  f)(x) = \sum_{a\in \FF_q^{\times}} z_a f(ax).$
	\end{center}
\end{Def}

We can observe that $V$ is an $(\FF_p[\FF_q^{\times}],\ast)$-submodule of $\mathbb{F}_p^{\mathbb{F}_q}$ if and only if it is a subspace of $\mathbb{F}_p^{\mathbb{F}_q}$ satisfying

\begin{equation}
\label{neweq1}
x \mapsto f(ax) \in V,
\end{equation}
for all $f  \in V$ and $a \in \mathbb{F}_q$. Clearly the following lemma holds.

\begin{Lem}
Let $p$ and $q$ be powers of primes. Then the unary part of an $(\mathbb{F}_p , \mathbb{F}_q)$-linearly closed clonoid is an $(\FF_p[\FF_q^{\times}],\ast)$-submodule of $\mathbb{F}_p^{\mathbb{F}_q}$.
\end{Lem}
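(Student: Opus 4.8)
The plan is to unwind the two definitions and check that the defining closure property of an $(\FF_p,\FF_q)$-linearly closed clonoid, restricted to unary functions, is exactly the requirement that the unary part be closed under the action $\ast$ (together with being a subspace). Let $C$ be an $(\FF_p,\FF_q)$-linearly closed clonoid and set $V := C^{[1]}$. By Definition \ref{DefClo}(1) applied with $n=1$, for all $f,g \in V$ and $a,b \in \FF_p$ we have $af+bg \in V$, so $V$ is a subspace of $\FF_p^{\FF_q}$. By the observation preceding the lemma (equation \eqref{neweq1}), it then suffices to verify the single extra condition that $x \mapsto f(ax)$ lies in $V$ for every $f \in V$ and every $a \in \FF_q$.

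The verification of that condition is precisely an instance of Definition \ref{DefClo}(2). First I would treat $a \neq 0$: taking $m=n=1$ and the $1\times 1$ matrix $A = (a) \in \FF_q^{1\times 1}$, part (2) of Definition \ref{DefClo} yields that the function $g\colon x \mapsto f(A\cdot x) = f(ax)$ lies in $C^{[1]} = V$. Comparing with Definition \ref{DefAct}, this says exactly $\tau_a \ast f \in V$. The case $a=0$ requires a small separate remark: the function $x \mapsto f(0)$ is the constant function with value $f(0)$, which can be obtained either from the matrix $A=(0)$ in Definition \ref{DefClo}(2) or, more cheaply, noting $0\cdot f = f - f \in V$ only gives the zero constant; to get arbitrary $\tau_a\ast f$ one only needs $a \in \FF_q^{\times}$ anyway, since the module structure is over the \emph{monoid} $\FF_q^{\times}$ whose elements are the multiplicative reduct. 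Indeed, the action in Definition \ref{DefAct} ranges over $a \in \FF_q^{\times}$, so only the invertible (hence nonzero) scalars matter, and the case $a=0$ need not be addressed at all.

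Finally I would assemble these facts: $V$ is a subspace closed under $f \mapsto (x\mapsto f(ax))$ for every $a \in \FF_q^{\times}$, which by the characterization in \eqref{neweq1} is exactly the statement that $V$ is an $(\FF_p[\FF_q^{\times}],\ast)$-submodule of $\FF_p^{\FF_q}$, since an arbitrary $\sigma = \sum_{a\in \FF_q^{\times}} z_a \tau_a$ acts by $(\sigma \ast f)(x) = \sum_a z_a f(ax)$, a finite $\FF_p$-linear combination of elements already shown to be in $V$. I do not expect any genuine obstacle here; the only point demanding care is the bookkeeping between the matrix-composition language of Definition \ref{DefClo}(2) and the monoid-ring action of Definition \ref{DefAct}, i.e.\ checking that the single scalar multiplication $x\mapsto ax$ realized by a $1\times 1$ matrix over $\FF_q$ corresponds to the generator $\tau_a$ of the monoid ring. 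This is exactly why the lemma is asserted to hold ``clearly,'' and the proof is a direct unravelling of the definitions.
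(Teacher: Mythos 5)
Your proof is correct and is essentially the argument the paper intends (the paper gives no proof, asserting the lemma holds ``clearly''): part (1) of Definition \ref{DefClo} with $n=1$ gives the subspace structure, and part (2) with $m=n=1$ and the $1\times 1$ matrix $A=(a)$ gives closure under $f\mapsto(x\mapsto f(ax))$, which by the observation preceding the lemma is exactly the submodule condition. One correction to your aside about $a=0$: in this paper $\FF_q^{\times}=(\FF_q,\cdot)$ is the \emph{monoid} reduct, i.e.\ all of $\FF_q$ including $0$ (only later, for the group ring $\FF_p[\FF_q\backslash\{0\}]$, is $0$ excluded), so $\tau_0$ is a genuine element of $\FF_p[\FF_q^{\times}]$ and the case $a=0$ does have to be covered --- which you in fact do, correctly, by taking $A=(0)$ in Definition \ref{DefClo}(2); your subsequent claim that ``the case $a=0$ need not be addressed at all'' is therefore both unnecessary and based on a misreading of the notation, and should be deleted.
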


In order to show the following results we use the Galois correspondence between clonoids and pairs of relations as developed in \cite{Pip.GTFM}.
\begin{Def}
	\label{Defpol}
	For a set $I$ and $R \subseteq A^I$ , $S \subseteq B^I$ let	$\Pol(R, S) := \{f : A^k \rightarrow B : k \in \NN, f(R, \dots, R) \subseteq S\}$ denote the set of finitary functions preserving $(R, S)$. We call $\Pol(R, S)$ the set of \emph{polymorphisms} of the relational pair $(R, S)$.
\end{Def}

Let $R := \{(S_i , T_i ) : i \in I\}$ be a set of pairs of relations on $A$ and $B$. Then the set of functions that are polymorphisms of all pairs of all the relations in $R$ is denoted by $\Pol(R)$.

\begin{Lem}
\label{LemPoly}
Let $p$ and $q$ be powers of distinct primes. Let $U$ be the subspace of $\mathbb{F}_q^{\mathbb{F}_q}$ that is generated by the identity map on $\mathbb{F}_q$, and let $V$ be an $(\FF_p[\FF_q^{\times}],\ast)$-submodule of  $\mathbb{F}_p^{\mathbb{F}_q}$. Then $\Pol(U, V )$ is an $(\mathbb{F}_p,\mathbb{F}_q)$-linearly closed clonoid with unary part $V$.
\end{Lem}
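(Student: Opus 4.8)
The plan is to prove the statement in three parts, corresponding to the three things that must be checked: that $\Pol(U,V)$ is closed under linear combinations on the left (property (1) of a linearly closed clonoid), that it is closed under composition with $\mathbb{F}_q$-linear maps on the right (property (2)), and that its unary part equals $V$. First I would unwind the definition of $\Pol(U,V)$ in this concrete setting. A function $f\colon \mathbb{F}_q^k \to \mathbb{F}_p$ lies in $\Pol(U,V)$ precisely when it maps the relation $U$ (applied componentwise) into $V$; since $U$ is the one-dimensional subspace $\{\lambda\cdot \mathrm{id} \mid \lambda \in \mathbb{F}_q\}$ of $\mathbb{F}_q^{\mathbb{F}_q}$, an arbitrary $k$-tuple of elements of $U$ is $(\lambda_1\,\mathrm{id},\dots,\lambda_k\,\mathrm{id})$ for scalars $\lambda_1,\dots,\lambda_k\in\mathbb{F}_q$, and feeding this into $f$ produces the unary function $x\mapsto f(\lambda_1 x,\dots,\lambda_k x)$ on $\mathbb{F}_q$. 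Hence the defining condition becomes: $f\in\Pol(U,V)$ if and only if for all $\lambda_1,\dots,\lambda_k\in\mathbb{F}_q$ the unary function $x \mapsto f(\lambda_1 x,\dots,\lambda_k x)$ belongs to $V$. This reformulation is the key technical observation and I would state it as the first step.

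Granting this reformulation, the closure properties follow by direct verification. For property (1), given $f,g\in\Pol(U,V)^{[k]}$ and $a,b\in\mathbb{F}_p$, the unary function associated to $af+bg$ along any tuple $(\lambda_1,\dots,\lambda_k)$ is $a(x\mapsto f(\lambda_1 x,\dots)) + b(x\mapsto g(\lambda_1 x,\dots))$, which lies in $V$ because $V$ is a subspace of $\mathbb{F}_p^{\mathbb{F}_q}$ and hence closed under $\mathbb{F}_p$-linear combinations. For property (2), suppose $f\in\Pol(U,V)^{[m]}$ and let $A\in\mathbb{F}_q^{m\times n}$, giving $g(x_1,\dots,x_n)=f(A\cdot(x_1,\dots,x_n)^t)$. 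Composing $g$ with a diagonal substitution $(\mu_1 x,\dots,\mu_n x)$ yields $x\mapsto f\bigl(A\cdot(\mu_1 x,\dots,\mu_n x)^t\bigr)$, and since each coordinate of $A\cdot(\mu_1 x,\dots,\mu_n x)^t$ is of the form $\lambda_i x$ for some $\lambda_i\in\mathbb{F}_q$ (the entries of $A$ times the $\mu_j$), this is exactly $x\mapsto f(\lambda_1 x,\dots,\lambda_m x)$, which is in $V$ by hypothesis; so $g\in\Pol(U,V)$.

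For the unary part, I would compute $\Pol(U,V)^{[1]}$ directly. A unary $f\colon\mathbb{F}_q\to\mathbb{F}_p$ is in $\Pol(U,V)$ exactly when $x\mapsto f(\lambda x)$ lies in $V$ for every $\lambda\in\mathbb{F}_q$. The inclusion $V\subseteq\Pol(U,V)^{[1]}$ is precisely the statement that $V$ is closed under the substitutions $f\mapsto(x\mapsto f(\lambda x))$, which is exactly the defining property \eqref{neweq1} of an $(\mathbb{F}_p[\mathbb{F}_q^{\times}],\ast)$-submodule recorded just before the lemma. The reverse inclusion $\Pol(U,V)^{[1]}\subseteq V$ comes from taking $\lambda=1$: if $x\mapsto f(\lambda x)\in V$ for all $\lambda$, then in particular $f=(x\mapsto f(1\cdot x))\in V$. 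This gives equality.

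The only genuinely delicate point is the first step, namely making sure the relational pair $(U,V)$ really does encode the diagonal substitutions and nothing more, so that the reformulation of the polymorphism condition is exactly $x\mapsto f(\lambda_1 x,\dots,\lambda_k x)\in V$ for all scalar tuples. I expect this to be the main obstacle: one must be careful that $U$ is taken as a unary relation (a subset of $\mathbb{F}_q^{\mathbb{F}_q}$, i.e. an index set $I=\mathbb{F}_q$) so that a $k$-tuple of elements of $U$ is precisely a choice of $k$ scalar multiples of the identity, and that the componentwise image landing in $V$ says exactly that the resulting unary function is in $V$. Once this bookkeeping is pinned down, every remaining verification is the routine algebra sketched above.
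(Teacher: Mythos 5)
Your proposal is correct and follows essentially the same route as the paper: unwind the polymorphism condition for the pair $(U,V)$ into the statement that $x\mapsto f(\lambda_1x,\dots,\lambda_kx)\in V$ for all scalars, then verify closure under $\mathbb{F}_p$-linear combinations and right composition with matrices by direct computation, and identify the unary part using the submodule property of $V$ (for $V\subseteq\Pol(U,V)^{[1]}$) and the substitution $\lambda=1$ (for the reverse inclusion). The only difference is presentational: you make explicit the reformulation of the polymorphism condition that the paper uses implicitly, which is a reasonable thing to spell out.
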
 

\begin{proof}
	
	By Definition \ref{Defpol} $V \subseteq \Pol(U, V )$ and every unary function in $\Pol(U, V )^{[1]} $ is in $V$. Thus $\Pol(U, V )^{[1]} = V$.
	
	Next we show that $\Pol(U, V )$ is an $(\mathbb{F}_p,\mathbb{F}_q)$-linearly closed clonoid. Let $f, g \in \Pol(U, V )$ be $m$-ary. Thus the functions $x \mapsto  f(c_1x, \dots, c_mx)$ and $x \mapsto g(c_1x, \dots, $ $c_mx)$ are in $V$ for all $c_1 ,\dots, c_m \in \mathbb{F}_q$. 
	
	Let $a, b \in \mathbb{F}_p$ and let $b_1,\dots,b_n \in \mathbb{F}_q$. Then
	
	\begin{center}			
		$(af + bg)(b_1x , \dots, b_mx) = af(b_1x , \dots, b_mx) + bg(b_1x , \dots, b_mx),$
	\end{center}
	for all $x \in \FF_q$. Hence $af + bg \in \Pol(U, V )$.	
	
	Next let  $n \in \NN$, let $A \in \mathbb{F}_q^{m\times n}$, and let $b_1,\dots,b_n \in \mathbb{F}_q$. Then 
	
	\begin{center}
		$x \mapsto f(A \cdot (b_1x,\dots, b_nx)^t ) = f( \sum_{i = 1}^nA_{1i}b_ix,\dots , \sum_{i = 1}^nA_{mi}b_ix)$
	\end{center}	
	is in $V$. Hence the $n$-ary function $g: \FF_q^n \rightarrow \FF_p$ such that $g: (x_1,\dots,x_n) \mapsto f(A \cdot (x_1,\dots, x_n)^t )$ is in $\Pol(U,V)$ and thus $\Pol(U,V)$ is an $(\mathbb{F}_p,\mathbb{F}_q)$-linearly closed clonoid.
\end{proof}

A clonoid $C$ with source set $A$ and target algebra $\mathbf{B}$ is \emph{finitely related} if there exists a finite set of pairs of finitary relations $R$ on $A$ and $B$ such that $C = \Pol(R)$. It can be easily observed that every finitely related clonoid is the clonoid of polymorphisms of a single pair of relations.

Together with Theorem \ref{Th1}, Lemma \ref{LemPoly} implies immediately the following.

\begin{Lem}
Let $p$ and $q$ be powers of distinct primes. Then every $(\mathbb{F}_p , \mathbb{F}_q )$-linearly closed clonoid is finitely related.
\end{Lem}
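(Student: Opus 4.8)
The goal is to show that every $(\mathbb{F}_p,\mathbb{F}_q)$-linearly closed clonoid is finitely related, i.e. equals $\Pol(R)$ for a finite set $R$ of pairs of relations. The plan is to combine the two preceding results: Theorem~\ref{Th1}, which says that such a clonoid $C$ is determined by its unary part $C^{[1]}$, and Lemma~\ref{LemPoly}, which constructs from the specific relational pair $(U,V)$ a linearly closed clonoid $\Pol(U,V)$ whose unary part is exactly the prescribed submodule $V$.

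First I would set $V := C^{[1]}$. By the Lemma preceding Definition~\ref{Defpol}, the unary part of any $(\mathbb{F}_p,\mathbb{F}_q)$-linearly closed clonoid is an $(\FF_p[\FF_q^{\times}],\ast)$-submodule of $\mathbb{F}_p^{\mathbb{F}_q}$, so $V$ is exactly the kind of object that Lemma~\ref{LemPoly} accepts as input. Let $U$ be the subspace of $\mathbb{F}_q^{\mathbb{F}_q}$ generated by the identity map, as in Lemma~\ref{LemPoly}. Applying that lemma, $\Pol(U,V)$ is an $(\mathbb{F}_p,\mathbb{F}_q)$-linearly closed clonoid whose unary part is $V = C^{[1]}$.

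Now both $C$ and $\Pol(U,V)$ are $(\mathbb{F}_p,\mathbb{F}_q)$-linearly closed clonoids with the same unary part. By Corollary~\ref{Cor2} (itself a consequence of Theorem~\ref{Th1}), two such clonoids coincide precisely when their unary parts coincide, so $C = \Pol(U,V)$. Since $(U,V)$ is a single pair of relations, $R := \{(U,V)\}$ is finite and exhibits $C$ as $\Pol(R)$; hence $C$ is finitely related. (This matches the remark after Lemma~\ref{LemPoly} that a finitely related clonoid is always the polymorphism clonoid of a single pair.)

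There is essentially no obstacle here: the statement is a formal corollary, and the only thing to check is that the three ingredients fit together cleanly. The one point deserving a line of care is that the input hypotheses of Lemma~\ref{LemPoly} are met—that $C^{[1]}$ really is an $(\FF_p[\FF_q^{\times}],\ast)$-submodule of $\mathbb{F}_p^{\mathbb{F}_q}$ and not merely a subspace—but this is exactly what the unnamed lemma just before Definition~\ref{Defpol} supplies. Everything else is a direct appeal to Corollary~\ref{Cor2}.
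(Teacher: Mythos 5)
Your proof is correct and follows exactly the route the paper intends: the paper states that the lemma follows "immediately" from Theorem~\ref{Th1} and Lemma~\ref{LemPoly}, and your argument (take $V=C^{[1]}$, apply Lemma~\ref{LemPoly} to get $\Pol(U,V)$ with the same unary part, then conclude $C=\Pol(U,V)$ via Corollary~\ref{Cor2}) is precisely the spelled-out version of that deduction.
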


\begin{Cor}
	\label{CorModuleIso}
	Let $p$ and $q$ be powers of distinct primes. Then the function $\pi^{[1]}$ that sends an $(\mathbb{F}_p , \mathbb{F}_q)$-linearly closed clonoid to its unary part is an isomorphism between the lattice of all $(\mathbb{F}_p , \mathbb{F}_q)$-linearly closed clonoids and the lattice of all $(\FF_p[\FF_q^{\times}],\ast)$-submodules of $\mathbb{F}_p^{\mathbb{F}_q}$.
\end{Cor}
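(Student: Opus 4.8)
The plan is to verify that $\pi^{[1]}$ is a bijection that both preserves and reflects the order, since an order-isomorphism between lattices is automatically a lattice isomorphism: it carries all existing meets and joins to meets and joins, so there is no need to check intersections and generated unions by hand. All the necessary ingredients have been prepared in the preceding results, and the argument amounts to assembling them correctly.

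First I would check that $\pi^{[1]}$ is well defined. By the Lemma stating that the unary part of an $(\mathbb{F}_p,\mathbb{F}_q)$-linearly closed clonoid is an $(\FF_p[\FF_q^{\times}],\ast)$-submodule of $\mathbb{F}_p^{\mathbb{F}_q}$, the image $\pi^{[1]}(C) = C^{[1]}$ indeed lands in the target lattice. Injectivity is exactly Corollary \ref{Cor2}: two $(\mathbb{F}_p,\mathbb{F}_q)$-linearly closed clonoids with the same unary part coincide. For surjectivity, given any $(\FF_p[\FF_q^{\times}],\ast)$-submodule $V$ of $\mathbb{F}_p^{\mathbb{F}_q}$, Lemma \ref{LemPoly} produces the $(\mathbb{F}_p,\mathbb{F}_q)$-linearly closed clonoid $\Pol(U,V)$ whose unary part is $V$, so that $\pi^{[1]}(\Pol(U,V)) = V$. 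Hence $\pi^{[1]}$ is a bijection, and by Corollary \ref{Cor2} its inverse sends $V$ to the unique clonoid $\Cid(V) = \Pol(U,V)$ having $V$ as its unary part.

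It then remains to see that $\pi^{[1]}$ is an order-isomorphism, that is, that $C \subseteq D$ if and only if $C^{[1]} \subseteq D^{[1]}$. The forward implication is immediate from the componentwise definition of the order on clonoids. For the reverse implication I would invoke Theorem \ref{Th1}, which gives $C = \Cid(C^{[1]})$ and $D = \Cid(D^{[1]})$; since $\Cid$ is a monotone closure operator, $C^{[1]} \subseteq D^{[1]}$ yields $C = \Cid(C^{[1]}) \subseteq \Cid(D^{[1]}) = D$. This shows at once that $\pi^{[1]}$ and its inverse $V \mapsto \Cid(V)$ are both order-preserving, which completes the argument. I do not expect a genuine obstacle here: the substance of the statement has already been distilled into Theorem \ref{Th1}, Corollary \ref{Cor2}, and Lemma \ref{LemPoly}, and the only point that merits a moment's care is the standard observation that a bijection preserving and reflecting the order is automatically a lattice isomorphism, so that the lattice operations require no separate verification.
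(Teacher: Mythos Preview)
Your proposal is correct and essentially matches the paper's approach: the paper states this corollary without proof, treating it as an immediate consequence of the lemma that unary parts are $(\FF_p[\FF_q^{\times}],\ast)$-submodules, Lemma~\ref{LemPoly}, Corollary~\ref{Cor2}, and Theorem~\ref{Th1}, which is precisely the assembly you carry out. Your added remark that an order-isomorphism between posets is automatically a lattice isomorphism is the right way to finish.
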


With the next lemma we begin to characterize the unary parts of the $(\FF_p,\FF_q)$-linearly closed clonoids. In order to do so we consider the group ring $\FF_p[\FF_q\backslash\{0\}]$ and the $(\FF_p[\FF_q\backslash\{0\}],\ast)$-submodules of $\mathbb{F}_p^{\mathbb{F}_q\backslash\{0\}}$ where $\ast$ is the restriction of the action defined in Definition \ref{DefAct} and $\FF_q\backslash\{0\}$ is the multiplicative group of $\FF_q$.

\begin{Lem}
	\label{LemUnaryIso}
	Let $p$ and $q$ be powers of distinct primes. Then the lattice of all $(\FF_p[\FF_q^{\times}],\ast)$-submodules of    $\mathbb{F}_p^{\mathbb{F}_q}$ is the direct product of the lattice of all $(\FF_p[\FF_q\backslash\{0\}],$ $\ast)$-submodules of $\mathbb{F}_p^{\mathbb{F}_q\backslash\{0\}}$ $\mathcal{L}_0(p,q)$ and the two-element chain $\mathbf{2}$.
\end{Lem}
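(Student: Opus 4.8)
The plan is to realise the whole decomposition as coming from a single central idempotent of the monoid ring. Write $R = \FF_p[\FF_q^{\times}]$ and $V = \FF_p^{\FF_q}$. The crucial observation is that $e := \tau_0$ is a central idempotent of $R$: indeed $\tau_0 \cdot \tau_0 = \tau_{0\cdot 0} = \tau_0$, and $\tau_0 \cdot \tau_a = \tau_{0\cdot a} = \tau_0 = \tau_a \cdot \tau_0$ for every $a$, so $e$ commutes with every basis element (in fact $R$ is commutative, since $\FF_q^{\times}$ is). From $(e \ast f)(x) = f(0\cdot x) = f(0)$ one reads off that $e \ast f$ is the constant function with value $f(0)$ and that $((\tau_1 - e)\ast f)(x) = f(x) - f(0)$.

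First I would record the Peirce decomposition $V = eV \oplus (\tau_1-e)V$ into $R$-submodules and identify the two summands: $eV$ is the one-dimensional space of constant functions, while $(\tau_1-e)V = \{f \in V : f(0) = 0\}$ has dimension $q-1$. On $eV$ every $\tau_a$ acts as the identity, so $eV$ is a one-dimensional module, and since every $R$-submodule is in particular an $\FF_p$-subspace, its submodule lattice is exactly $\{\mathbf{0}, eV\}$, the two-element chain $\mathbf{2}$. On $(\tau_1-e)V$ the idempotent $e = \tau_0$ acts as $0$, while each unit $a \in \FF_q\backslash\{0\}$ sends $f$ to $x \mapsto f(ax)$, merely permuting the coordinates indexed by $\FF_q\backslash\{0\}$.

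Next I would exhibit the module isomorphism $(\tau_1-e)V \cong \FF_p^{\FF_q\backslash\{0\}}$ given by restricting a function vanishing at $0$ to $\FF_q\backslash\{0\}$. This is an $\FF_p$-linear bijection intertwining the action of $\tau_a$ (for $a \neq 0$) with the group-ring action $\ast$ of $\FF_p[\FF_q\backslash\{0\}]$. Because $\tau_0$ acts as $0$ on $(\tau_1-e)V$, closure of a subspace under $\tau_0$ is automatic, so a subspace of $(\tau_1-e)V$ is an $R$-submodule precisely when it is closed under all $\tau_a$ with $a \neq 0$; hence the lattice of $R$-submodules of $(\tau_1-e)V$ is isomorphic to $\mathcal{L}_0(p,q)$, the lattice of $\FF_p[\FF_q\backslash\{0\}]$-submodules of $\FF_p^{\FF_q\backslash\{0\}}$.

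Finally I would invoke the splitting afforded by the central idempotent: for any $R$-submodule $N \le V$ one has $N = (N\cap eV) \oplus (N\cap(\tau_1-e)V)$, since $n = e\ast n + (\tau_1-e)\ast n$ with both summands lying in $N$ and in the respective factor, and $eV \cap (\tau_1-e)V = \mathbf{0}$. Conversely any direct sum of a submodule of $eV$ with one of $(\tau_1-e)V$ is an $R$-submodule of $V$, and $N \mapsto (N\cap eV,\, N\cap(\tau_1-e)V)$ is an order isomorphism with order-preserving inverse. This yields $\mathcal{L}(R\text{-submodules of }V) \cong \mathbf{2} \times \mathcal{L}_0(p,q)$, which is the claim. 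The only point requiring genuine care is the verification that a central idempotent splits every submodule; equivalently that $\mathrm{Hom}_R(eV,(\tau_1-e)V) = 0$ and symmetrically, which holds because $e$ acts as the identity on $eV$ and as $0$ on $(\tau_1-e)V$. I expect this splitting step, together with checking that the restriction map really intertwines the two actions, to be the main (though routine) obstacle.
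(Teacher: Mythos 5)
Your proof is correct, and while it lands on exactly the same decomposition as the paper -- your $eV$ and $(\tau_1-e)V$ are the paper's $V_1$ (constants) and $V_0$ (functions vanishing at $0$) -- the mechanism you use to factor the submodule lattice is genuinely different. The paper argues lattice-theoretically: it shows that every submodule $W$ satisfies $W\le V_0$ or $W\ge V_1$ (because $v_0\neq 0$ forces $(1,\dots,1)$ into the submodule generated by $\mathbfsl{v}$, which is secretly your computation $\tau_0\ast\mathbfsl{v}=v_0\cdot(1,\dots,1)$), and then invokes the modular law twice, once to get $W=(W\cap V_0)\vee(W\cap V_1)$ and once to prove injectivity of the map $(R,W)\mapsto R\vee W$. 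You instead observe that $\tau_0$ is a central idempotent of the monoid ring and apply the Peirce decomposition, so every submodule $N$ splits as $(N\cap eV)\oplus(N\cap(\tau_1-e)V)$ via $n=e\ast n+(\tau_1-e)\ast n$, with no case analysis and no appeal to modularity. Your route is more uniform and is standard module theory (it would work verbatim for any central idempotent in any such setting); the paper's is more elementary and self-contained at the lattice level. You are also careful about the one point the paper leaves implicit in its isomorphism $\pi_{\geq 2}$: on $(\tau_1-e)V$ the element $\tau_0$ acts as zero, so closure under it is automatic and the restriction map really does identify $R$-submodules of $V_0$ with $\FF_p[\FF_q\backslash\{0\}]$-submodules of $\FF_p^{\FF_q\backslash\{0\}}$. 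No gaps.
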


\begin{proof}
	
	Let $V_0, V_1 \subseteq \mathbb{F}_p^{\FF_q}$ be defined by:
	
	\begin{flushleft}
		$V_0 := \{\mathbfsl{v} \in \mathbb{F}_p^{\FF_q}\mid v_0=0\}$
		\\$V_1 := \{\lambda(1,\dots,1)\mid \lambda \in \mathbb{F}_p\}$.
	\end{flushleft}
	It is clear that $V_0$ and $V_1$ are $(\FF_p[\FF_q^{\times}],\ast)$-submodules of $\mathbb{F}_p^{\mathbb{F}_q}$ and we denote  the lattices of all $(\FF_p[\FF_q^{\times}],\ast)$-submodules of $V_0$ and $V_1$ by $\mathcal{L}_0$ and $\mathcal{L}_1$. We can observe that $\mathcal{L}_1 \cong \mathbf{2}$. Moreover, $V_0 \vee V_1 = 1$ and $V_0 \cap V_1 = 0$. Let $W$ be an $(\FF_p[\FF_q^{\times}],\ast)$-submodule of $\mathbb{F}_p^{\mathbb{F}_q}$. Then we have that either $W \leq V_0$ or $W \geq V_1$, since $\mathbfsl{v} \not\in V_0$ implies $v_0 \not=0$ and thus that $(1,\dots,1)$ is in the $(\FF_p[\FF_q^{\times}],\ast)$-submodule of $\mathbb{F}_p^{\mathbb{F}_q}$ generated by $\mathbfsl{v}$. Next, we show that $W = (W \cap V_0) \vee (W \cap V_1)$:
	
	Case $W \leq V_0$: then $(W \cap V_0) \vee (W \cap V_1) = W  \vee (W \cap V_1) = W$
	
	Case $W \geq V_1$: then $(W \cap V_0) \vee (W \cap V_1) = (W \cap V_0) \vee V_1$ which is equal to $W \cap (V_0 \vee V_1) = W$, using the modular law.
	Thus the function $\gamma$ from the lattice $\mathcal{L}_0 \times \mathcal{L}_1$ to  the lattice of all $(\FF_p[\FF_q^{\times}],\ast)$-submodules of $\mathbb{F}_p^{\mathbb{F}_q}$ such that $\gamma: (R,W) \mapsto R \vee W$ is clearly surjective and order preserving. Furthermore, let $(R_1,W_1), (R_2,W_2) \in \mathcal{L}_0 \times \mathcal{L}_1$ with $R_1 \vee W _1 = R_2 \vee W_2$. Then, since $R_1, R_2 \leq V_0$, $W_1, W_2 \leq V_1$, and $V_0 \cap V_1 = 0$, $R_1 = (R_1 \vee W_1) \cap V_0 = (R_2 \vee W_2) \cap V_0 = R_2$, using the modular law. With the same strategy we can prove that $W_1 = W_2$ and it is clear that $\gamma^{-1}$ is order preserving. Thus $\gamma$ is a lattice isomorphism.

	 Hence, by Corollary \ref{CorModuleIso}, $\mathcal{L}(p,q)$ is isomorphic to $\mathbf{2} \times \mathcal{L}_0$. Furthermore, the lattice $\mathcal{L}_0$  is isomorphic to $\mathcal{L}_0(p,q)$ via the isomorphism $\pi_{\geq 2} : \mathcal{L}_0 \rightarrow \mathcal{L}_0(p,q)$ such that:
	\begin{equation}\label{pip}
	\pi_{\geq 2}(C):= \{(v_1,\dots,v_{q-1}) \mid (0,v_1,\dots,v_{q-1}) \in C\},
	\end{equation}
	for all $C \in \mathcal{L}_0$.
\end{proof}

 The next step is to characterize the lattice $\mathcal{L}_0(p,q)$. To this end we observe that $V \in \mathcal{L}_0(p,q)$ if and only if is a subspace of $\FF_p^{\FF_q\backslash\{0\}}$ satisfying 
 
\begin{equation}
\label{neweq2}
x \mapsto f(ax) \in V,
\end{equation}
for all $f  \in V$ and $a \in \mathbb{F}_q\backslash\{0\}$. 

Let $\alpha$ be a generator of the multiplicative subgroup of $\FF_q$. The closure under the linear transformation $A(p,q): \mathbb{F}_p^{\FF_q \backslash\{0\}} \to  \mathbb{F}_p^{\FF_q \backslash\{0\}}$ defined as $A(p,q)(f)= x \mapsto f(\alpha x)$ is enough to describe the property to be closed with all the $q-1$ linear transformations in (\ref{neweq2}). 
We can see that the minimal polynomial of $A(p,q)$ is $x^{q-1}-1$.

Thus the last step is to characterize the lattice of the $A(p,q)$-invariant subspaces of $\mathbb{F}_p^{\FF_q \backslash\{0\}}$ which is isomorphic to the lattice of all $(\FF_p[\FF_q\backslash\{0\}],$ $\ast)$-submodules of $\mathbb{F}_p^{\mathbb{F}_q\backslash\{0\}}$ $\mathcal{L}_0(p,q)$ and to the lattice of all $(\FF_p,\FF_q)$-linearly closed clonoids composed by $0$-preserving functions.

Corollary \ref{CorModuleIso} and Lemma \ref{LemUnaryIso} describe the structure of the lattice of all $(\mathbb{F}_p,\mathbb{F}_q)$-linearly closed clonoids in case $p$ and $q$ are powers of distinct primes. In the figure below we draw a scheme of the lattice of all $(\mathbb{F}_p,\mathbb{F}_q)$-linearly closed clonoids. On the right hand side we have the $0$-preserving part and on the left, the part with constants. Let $\mathbf{1},\mathbf{0_P}, \mathbf{C}, \mathbf{\{0\}}$ denote the $(\mathbb{F}_p,\mathbb{F}_q)$-linearly closed clonoids of all functions, of all $0$-preserving functions, of all constants, and of the zero constants respectively.
\vspace*{4.0cm}
\begin{figure}[htbp]
	\begin{center}
		\begin{picture}(50,80)

		\put(50,130){\circle*{4}}
		\put(50,15){\circle*{4}}
		\put(-50,180){\circle*{4}}
		\put(-50,65){\circle*{4}}
		
		\put(63,125){\makebox(0,0){\footnotesize$\mathbf{0_P}$}}
		\put(60,10){\makebox(0,0){\footnotesize$\mathbf{\{0\}}$}}		
		\put(-60,175){\makebox(0,0){\footnotesize$\mathbf{1} $}}
		\put(-58,60){\makebox(0,0){\footnotesize$\mathbf{C}$}}
		
		\qbezier(50,15)(90,69)(50,130)	
		\qbezier(50,15)(10,69)(50,130)	
		\qbezier(-50,65)(-90,128)(-50,180)	
		\qbezier(-50,65)(-10,128)(-50,180)		
		\put(50,15){\line(-2,1){100}}
		\put(50,130){\line(-2,1){100}}
		\put(25,50){\line(-2,1){50}}
		\put(25,80){\line(-2,1){50}}
		\put(25,110){\line(-2,1){50}}
		
		\end{picture}
	\end{center}
\end{figure}

The lattice of invariant subspaces under a linear transformation on finite-dimensional vector spaces was characterized in \cite{BF.TISL}. We can see that $A(p,q)$ has minimal polynomial $g = x^{q-1} -1$. Let $g = \prod_{i =1}^s p_i^{k_i}$ be the prime factorization of $g$ over $\mathbb{F}_p[x]$. We define $V_i = ker(p_i(A(p,q))^{k_i})$, and $A(p,q)_i = A(p,q)|_{V_i}$. We know from \cite{BF.TISL} that with the prime factorization of $g$ we can split our vector space $\mathbb{F}_q^{\FF_q\backslash\{0\}}$ into its primary decomposition:

\begin{center}
	$\mathbb{F}_q^{\FF_q\backslash\{0\}} =\bigoplus_{i = 1}^s V_i$.
\end{center}
According to \cite{BF.TISL} the lattice $\mathcal{L}(A(p,q))$ of the  $A(p,q)$-invariant subspaces of $\mathbb{F}_q^{\FF_q\backslash\{0\}}$, is:

\begin{equation}\label{eqdirprod}
	\mathcal{L}(A(p,q)) \cong \prod_{i = 1}^{s}\mathcal{L}(A(p,q)_i).
\end{equation}
We can observe that $\mathbb{F}_q^{\FF_q\backslash\{0\}}$ is an $A(p,q)$-cyclic space generated by $(1,0,\dots,0)$. Thus, every $A(p,q)$-invariant subspace of $\mathbb{F}_q^{\FF_q\backslash\{0\}}$ is $A(p,q)$-cyclic, by Remark \ref{Rem}.

With these tools we are now ready to prove Theorem \ref{Thm14} and \ref{ThmShapeL1}.

\begin{proof}[Proof of Theorem \ref{Thm14}.]
	Let $C$ be an $(\mathbb{F}_p,\mathbb{F}_q)$-linearly closed clonoid and let $C_0$ be its $0$-preserving part. Let $V$ be the image of $C_0$ under the isomorphism of Corollary \ref{CorModuleIso} and let $(V_1,V_2)$ be the image of $V$ under the isomorphism of Lemma \ref{LemUnaryIso}. Furthermore let $W$ be the image of $V_1$ under $\pi_{\geq 2}$. We have observed that $W$ is an $A(p,q)$-cyclic space. Let $\mathbfsl{v}$ be the $A(p,q)$-cyclic vector for $W$ (Definition \ref{DefCyclic}). We can observe that $f = (0,v_1,\dots,v_{q-1})$ is a generator for $C_0^{[1]}$ and thus, by Theorem \ref{Th1}, is a unary generator for $C_0$. Furthermore, either $C_0 = C$ or $C_0 \subset C$.
	
	Case $C_0 = C$: then $C$ is generated by $f$.
	
	Case $C_0 \subset C$: then $\{1\} \in C$, where $1$ is the constant unary function with value $1$. Let $f$ be the unary generator of $C_0$. We will prove that $C$ is generated by $h = f + 1$. Indeed, let $g \in C$ be an $n$-ary function. Then, there exists a $0$-preserving $n$-ary function $g_0$ such that $g = g_0 + g(\mathbfsl{0})$, where $g(\mathbfsl{0})$ is the constant $n$-ary function with value $g(\mathbfsl{0})$. Hence, $g \in \Cid(\{g_0\}) \vee \Cid(\{g(\mathbfsl{0})\}) \subseteq \Cid(\{f\}) \vee \Cid(\{1\}) \subseteq \Cid(\{h\})$. Thus $C = \Cid(\{h\})$ and the claim holds.
	
\end{proof}

\begin{proof}[Proof of Theorem \ref{ThmShapeL1}]
	Let $p$ and $q$ be powers of distinct primes and let $\prod_{i =1}^n p_i^{k_i}$ be the prime factorization of the polynomial $g = x^{q-1} -1$ in $\mathbb{F}_p[x]$. First we know from Corollary \ref{CorModuleIso} and Lemma \ref{LemUnaryIso} that $\mathcal{L}(p,q) \cong \mathbf{2} \times \mathcal{L}_0(p,q)$. Furthermore, we know that $\mathcal{L}_0(p,q)$ is isomorphic to the lattice of all $A(p,q)$-invariant subspaces of $\FF_p^{\FF_q\backslash\{0\}}$. We have observed that $A(p,q)$ has $g$ as minimal polynomial. So let $V_i$ be the $i$th primary component and let $(A(p,q))_i$ be the $i$th restriction of $A(p,q)$ with minimal polynomial $p_i^{k_i}$, for $i=1,\dots,n$. Then we know that $\mathbb{F}_p^{\FF_q\backslash\{0\}}$ is $(A(p,q))$-cyclic with $(1,0,\dots,0)$ as $(A(p,q))$-cyclic vector. Hence also $V_i$ is $(A(p,q))$-cyclic, for $i=1,\dots,n$, as a subspace of an $(A(p,q))$-cyclic space (Remark \ref{Rem}). From \cite[Lemma $2$]{BF.TISL}, the lattice of all $A(p,q)_i$-invariant subspaces of $V_i$ is isomorphic to the chain with $k_i + 1$ elements. 
	Thus, from (\ref{eqdirprod}), we have that:	
	\begin{center}
		$\mathcal{L}(p,q) \cong \mathbf{2} \times \prod_{i =1}^n \mathbf{C}_{k_i+1}$
	\end{center}
and the claim holds.

\end{proof}

With this theorem we have completely characterized the structure of the lattice $\mathcal{L}(p,q)$ using the prime factorization of the polynomial $x^{q-1}-1$, which can be easily computed.
We conclude our investigation with a corollary that shows how the lattice of all $(\mathbb{F}_p,\mathbb{F}_q)$-linearly closed clonoids is structured.

\begin{Cor}
	Let $p$ and $q$ be powers of distinct primes. Then the lattice $\mathcal{L}(p,q)$ of the $(\mathbb{F}_p,\mathbb{F}_q)$-linearly closed clonoids is a distributive lattice.
\end{Cor}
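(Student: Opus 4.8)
The plan is to read off distributivity directly from the structural description of $\mathcal{L}(p,q)$ established in Theorem \ref{ThmShapeL1}, namely the isomorphism
\[
\mathcal{L}(p,q) \cong \mathbf{2} \times \prod_{i=1}^n \mathbf{C}_{k_i+1}.
\]
Since being a distributive lattice is a property invariant under lattice isomorphism, it suffices to show that the right-hand side is distributive. All the real content of the corollary is already contained in Theorem \ref{ThmShapeL1}; what remains is purely formal.

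First I would observe that every chain is distributive. In a chain the meet of two elements is their minimum and the join is their maximum, so the identity $x \wedge (y \vee z) = (x \wedge y) \vee (x \wedge z)$ reduces to comparing the relative order of $x,y,z$, and holds in every case. In particular the two-element chain $\mathbf{2}$ and each factor $\mathbf{C}_{k_i+1}$ is a distributive lattice.

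Next I would invoke the closure of the class of distributive lattices under direct products. This holds because distributivity is expressed by an identity and the lattice operations in a direct product are computed componentwise; hence the identity holding in each factor forces it to hold in the product. Applying this to the finite product $\mathbf{2} \times \prod_{i=1}^n \mathbf{C}_{k_i+1}$ of chains yields a distributive lattice, and transporting this along the isomorphism of Theorem \ref{ThmShapeL1} gives that $\mathcal{L}(p,q)$ is distributive.

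There is no genuine obstacle here: the argument relies only on the two standard facts that chains are distributive and that distributivity is preserved under direct products and isomorphisms. The entire weight of the statement rests on the structural isomorphism of Theorem \ref{ThmShapeL1}, and once that is in hand the corollary follows immediately.
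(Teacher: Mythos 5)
Your proposal is correct and matches the paper's proof exactly: both deduce distributivity from the direct-product-of-chains decomposition in Theorem \ref{ThmShapeL1}, using the standard facts that chains are distributive and that distributivity is preserved under direct products and isomorphisms. You simply spell out these standard facts in more detail than the paper does.
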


\begin{proof}
	It follows from Theorem \ref{ThmShapeL1}  that $\mathcal{L}(p,q)$ is a direct product of chains and hence is distributive.
\end{proof}

\section*{Acknowledgements}

The author thanks Erhard Aichinger, who inspired this paper, and Sebastian Kreinecker for many hours of fruitful discussions. The author thanks the referees for their useful suggestions.

\bibliographystyle{alpha}
\end{document}